\documentstyle[amssymb,amsfonts,12pt]{amsart}
\newtheorem{theorem}{Theorem}[section]
\newtheorem{lemma}[theorem]{Lemma}
\newtheorem{corollary}[theorem]{Corollary}
\newtheorem{proposition}[theorem]{Proposition}
\newtheorem{definition}[theorem]{Definition}

\theoremstyle{remark}
\newtheorem{remark}[theorem]{Remark}

\textwidth16cm
\topmargin0cm
\oddsidemargin0cm
\evensidemargin0cm
\textheight22.5cm
\def\QSet{\mbox{\rm\kern.24em
\vrule width.03em height1.48ex depth-.051ex \kern-.26em Q}}

\def\E{{\mathbb E}}
\def\P{{\bf P}}
\def\D{{\mathcal D}}
\def\T{{\bf T}}
\def\S{{\bf S}}
\def\Z{{\bf Z}}
\def\R{{\bf R}}

\def\size{{\rm size}}
\def\diam{{\rm diam}}

\def\\P{{\mathcal P}}

\def\F{{\mathcal F}}

\def\X{{\mathcal A}}
\def\O{{\mathcal B}}
\def\I{{\mathcal I}}

\def\bas{\begin{align*}}
\def\eas{\end{align*}}
\def\bi{\begin{itemize}}
\def\ei{\end{itemize}}
\newenvironment{proof}{\noindent {\bf Proof} }{\endprf\par}
\def \endprf{\hfill  {\vrule height6pt width6pt depth0pt}\medskip}
\def\emph#1{{\it #1}}

\def\Wset{{\bf W}}

\def\<{\left<}
\def\>{\right>}

\title{The Walsh model for $M_2^*$ Carleson}

\begin{document}
\author{Ciprian Demeter}
\address{Department of Mathematics, UCLA, Los Angeles CA 90095-1555}
\email{demeter@@math.ucla.edu}

\author{Michael Lacey}
\address{ School of Mathematics,
Georgia Institute of Technology, Atlanta,  GA 30332 USA}
\email{lacey@@math.gatech.edu}

\author{Terence Tao}
\address{Department of Mathematics, UCLA, Los Angeles CA 90095-1555}
\email{tao@@math.ucla.edu}

\author{Christoph Thiele}
\address{Department of Mathematics, UCLA, Los Angeles CA 90095-1555}
\email{thiele@@math.ucla.edu}

\thanks{The first author was supported by NSF Grant DMS-0556389}
\thanks{The second author was supported by an NSF Grant}
\thanks{The third author was supported by a grant from the Macarthur Foundation}
\thanks{The fourth author was supported by NSF Grant DMS-0400879}

\keywords{Carleson's operator, multiplier norm}
\thanks{{\it 2000 Mathematics Subject Classification:}  42B25, 42B20}

\begin{abstract}
We study the Walsh model of a certain maximal truncation of 
Carleson's operator related to the Return Times Theorem.
\end{abstract}

\maketitle

\section{Introduction}
Let $\D$ denote the collection of all the dyadic intervals of the form $[2^im,2^i(m+1))$, $i,m\in\Z$ and let $C^{\D}(\R_+)$ be the set of all the functions $f:\R_+\to\R$ that are finite linear combinations of characteristic functions of dyadic intervals. 

For $l\ge 0$ we recall that the $l-$th Walsh function $W_l$ is defined recursively by the formula
$$W_0=1_{[0,1)}$$
$$W_{2l}=W_l(2x)+W_l(2x-1)$$
$$W_{2l+1}=W_l(2x)-W_l(2x-1).$$
We recognize that $W_1$ is the Haar function also denoted by $h$.

\begin{definition}
A tile $P$ is a rectangle $I_P\times \omega_P $ of area one, such that $I_P$ and $\omega_P$ are dyadic intervals. If $P=[2^in,2^i(n+1))\times [2^{-i}l,2^{-i}(l+1))$ is such a tile, we define the corresponding Walsh wave packet $w_P$ by
$$w_P(x)=2^{-i/2}W_l(2^{-i}x-n).$$
\end{definition}
The  intervals $I_P$ and $w_P$ will be referred to as the time and frequency intervals of the tile $P$.
\begin{definition}
A bitile $P$ is a rectangle $I_P\times \omega_P $ of area two, such that $I_P$ and $\omega_P$ are dyadic intervals. 
For any bitile 
$$P=[2^in,2^i(n+1))\times [2^{-i+1}l,2^{-i+1}(l+1))$$
we define the lower tile
$$P_1=[2^in,2^i(n+1))\times [2l2^{-i},(2l+1)2^{-i})$$
and the upper tile
$$P_2=[2^in,2^i(n+1))\times [(2l+1)2^{-i},(2l+2)2^{-i}).$$
If $\omega_P$ is the frequency interval of the bitile $P$ then we will use the notations $\omega_{P,1}$ and $\omega_{P,2}$ for the the frequency intervals of the sub-tiles $P_1$ and $P_2$.
\end{definition}

We next recall the definition of the Walsh-Fourier transform. Except on a set of measure 0 (which we shall always ignore), every $x\in \R_{+}$ can be identified with a doubly-infinite set of binary digits
$$x=...a_2a_1a_0.a_{-1}a_{-2}...$$
where $a_n\in\Z_2$ and $a_n$ is eventually zero as $n\to\infty$. We define two operations on $\R_{+}$ by
$$a_n(x\oplus y):=a_n(x)+a_n(y)$$
$$a_n(x\otimes y):=\sum_{m\in\Z}a_m(x)a_{n-m}(y),$$
where the addition and multiplication in the right hand terms are considered modulo 2. We next define the function $e:\R_{+}\to\{-1,1\}$ to be 1 when $a_{-1}=0$ and $-1$ when $a_{-1}=1$. Using this we can introduce the Walsh-Fourier transform of a function $f\in C^{\D}(\R_+)$ to be
$$\widehat{f}(\xi):=\int e(x\otimes\xi)f(x)dx.$$ We also note that the inverse Walsh-Fourier transform ${f}\check{\ }$ and the Walsh-Fourier transform coincide in this context.

In the following we will denote with $\S^{\operatorname{univ}}$ the collection of all the bitiles. 
It is known, see \cite{Th1}, that the almost everywhere convergence of the Walsh series for $f\in L^p$
$$\sum_{l\ge 0}\<f,W_l\>W_l(x)$$
is a consequence of the estimate
\begin{equation*}
\|\Wset f\|_p\lesssim \|f\|_{p},
\end{equation*}
where
$$\Wset f(x)= \|\sum_{P\in\S^{\operatorname{univ}}} \<f,w_{P_1}\> w_{P_1}(x) 1_{\omega_{P,2}}(\theta)\|_{L^{\infty}_{\theta}}.$$

Define the $M_2^*$ norm of a family of Walsh multipliers $m_k$ as
$$\|(m_k(\theta))_{k\in\Z}\|_{M_2^*(\theta)}= \sup_{\|g\|_2=1} 
\|\sup_k |(\widehat{g} m_k){\check{\ }}(x)|\|_{L_x^2}.$$
In this paper we will be concerned with getting estimates for the operator 
$${\bf W}^{\operatorname{max}}f(x)= \|(\sum_{P\in\S^{\operatorname{univ}}:|I_P|< 2^k} \<f,w_{P_1}\> w_{P_1}(x) 1_{\omega_{P,2}}(\theta))_{k\in\Z}\|_{M_2^*(\theta)}.$$

\begin{theorem}
\label{main}
For each $1<p<\infty$ we have
\begin{equation}
\label{e.mainy1}
\|{\bf W}^{\operatorname{max}}f\|_p\lesssim_p\|f\|_p.
\end{equation}
\end{theorem}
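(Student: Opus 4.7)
The approach is to adapt the time-frequency analysis of the classical Walsh--Carleson theorem (cf.~\cite{Th1}) to the stronger $M_2^*$ multiplier norm.

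By density and truncation one may replace $\S^{\operatorname{univ}}$ with a finite subcollection. I then linearize the $M_2^*$ structure via measurable selection: for each $x$ choose $g_x$ with $\|g_x\|_2=1$, and for each $(x,y)$ choose an integer $k(x,y)$, so that
\begin{equation*}
  \Wset^{\operatorname{max}} f(x) \le 2 \Big(\int B(x,y)^2 \, dy\Big)^{1/2},\quad B(x,y) := \sum_{P:\,|I_P|<2^{k(x,y)}} \<f,w_{P_1}\>w_{P_1}(x)(\pi_{P_2} g_x)(y),
\end{equation*}
where $\pi_{P_2}$ denotes the Walsh orthogonal projection onto the frequency interval $\omega_{P,2}$. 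Dualizing the outer $L^p_x$ against a test function $\phi$ on a major subset $G'\subseteq G$ (for a restricted weak-type reduction) and the inner $L^2_y$ against an $L^2$-normalized $h_x$, one is led to estimating a four-linear form in $(f,\phi,g_x,h_x)$.

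Next, organize the bitiles into trees $T$ with top $P_T$ in the usual Walsh order, separating them into \emph{1-trees} and \emph{2-trees} by the standard dichotomy. Define two sizes on a collection of bitiles: the classical $\size_1$ controlling the $L^2$-concentration of $\<f,w_{P_1}\>/|I_P|^{1/2}$, and a new $\size_2$ adapted to the $M_2^*$ structure that incorporates $g_x$, $h_x$ and the selector $k$. A crucial structural feature is that within a 2-tree the frequency bands $\omega_{P,2}$ are nested, so the partial sums $B(x,y)$ form a Walsh martingale on the frequency side as $k$ varies and $\sup_k$ is controlled via Doob's maximal inequality; within a 1-tree the bands $\omega_{P,2}$ are instead pairwise disjoint, so Bessel orthogonality handles the same sum. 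Either case should yield the central \emph{tree estimate}: the contribution of $T$ to the four-linear form is bounded by $\size_1(T)\,\size_2(T)\,|I_T|$.

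The proof then concludes with the standard iterative selection procedure, extracting collections of trees with geometrically decreasing sizes and controlling $\sum_T|I_T|$ via an appropriate energy quantity. Summing as in \cite{Th1} yields the restricted weak-type bound, and interpolation gives \eqref{e.mainy1} in the full range $1<p<\infty$. The main obstacle is the correct formulation of $\size_2$: it must simultaneously admit (i) the Doob/Bessel-based tree estimate, (ii) a John--Nirenberg-type bound for the selection algorithm, and (iii) control via $\|g_x\|_2=\|h_x\|_2=1$ at the $L^2$ endpoint. Calibrating these three requirements is where the Return-Times-Theorem nature of the problem enters the argument in a nontrivial way.
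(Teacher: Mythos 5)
There is a genuine gap, and it sits exactly where you flag the difficulty: the ``calibration of $\size_2$.''  Your plan linearizes the $M_2^*$ norm by choosing $g_x$, $h_x$ and a selector $k(x,y)$, then dualizes into a four-linear form and tries to run the single-tree estimate via Doob (for 2-trees) or Bessel (for 1-trees).  The paper does something structurally different and avoids the linearization entirely: it establishes a \emph{pointwise} bound on the $M_2^*$ norm of the multiplier $\theta\mapsto\sum_{P:|I_P|<2^k}\langle f,w_{P_1}\rangle w_{P_1}(x)1_{\omega_{P,2}}(\theta)$ for each fixed $x$ outside an exceptional set (Theorems~\ref{mainineqyy}, \ref{mainineqyy12}, \ref{mainineqyy123}).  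This is possible because for a fixed $x$ only the stack of trees $\T$ with $x\in I_\T$ contributes, and the stack produces a finite family of frequencies $\{\xi_\T\}$.  The maximal-in-$k$ multiplier problem one is then facing --- a sum of weighted Littlewood--Paley projections attached to $N$ different frequency points, with $\sup_k$ outside --- is exactly the type handled by Bourgain's maximal multiplier inequality, and the paper's key technical input (Proposition~\ref{prop:gh098cfhh}) is a \emph{weighted/variational} generalization of that result, allowing the coefficients $\epsilon_\omega$ to vary provided they have bounded $V^r$ variation along nested chains.  The $V^r$ control is itself supplied by the jump inequality for dyadic martingales (Lemmas~\ref{jumplemma}--\ref{varlemma}), applied to the modulated Haar expansion inside a single 2-tree (Theorem~\ref{BMOestimy1}).

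Your sketch is missing precisely this ingredient, and Doob alone will not substitute for it.  Within a single 2-tree the nesting of $\omega_{P,2}$ does give a martingale structure, but the $M_2^*$ norm asks you to control the maximal function of the multiplier built out of the \emph{entire stack} of trees, whose frequencies $\xi_\T$ are distinct.  Bounding each tree by $\size_1(\T)\size_2(\T)|I_\T|$ and summing cannot recover the sharp dependence $\beta^{r/4-1/2}$ on the number of overlapping trees that the weighted Bourgain lemma delivers, and without that, the exceptional-set bookkeeping in the final Carleson-type summation (Section~\ref{section:last}) does not close.  Secondly, your claim that ``within a 1-tree the bands $\omega_{P,2}$ are pairwise disjoint'' is only true after one first removes the $2$-tree part of each tree from the whole family; the paper makes exactly this global reduction in Section~\ref{arbitrary}, and it is needed to get the disjointness of the upper tiles $P_2$ that you want to invoke.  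Finally, the linearization $\theta\mapsto g_x$, $y\mapsto k(x,y)$ also obscures the one structural fact the paper exploits: for each fixed $\theta$ (resp.\ each nested chain $\omega_k$) the bitiles with $\omega\subseteq\omega_{P,2}$ belong to a \emph{single} tree $\T_\theta$ (resp.\ $\T_x$), so the variational norm along the chain is a single-tree object and is controlled by the $E^{(2)}$ exceptional set.  That observation, together with the weighted Bourgain lemma, is the engine of the proof; without identifying some analogue of it, the $\size_2$ you would need cannot simultaneously satisfy your requirements (i)--(iii).
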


It has been acknowledged, see for example \cite{MTT6}, \cite{Th2}, that the Walsh models provide a lot of the intuition that lies behind their Fourier analog. In our case, the interest in proving Theorem \ref{main} is motivated by its connections with the following  Return Times Theorem due to Bourgain \cite{Bo5}.  

\begin{theorem}
\label{Bretthm}
Let ${\bf X}=(X,\Sigma,\mu, \tau)$ be a dynamical system and let $1\le p,q\le \infty$ satisfy $\frac{1}p+\frac1q\le 1$. For each function $f\in L^{p}(X)$ there is a universal set $X_0\subseteq X$ with $\mu(X_0)=1$, such that for each second dynamical system  ${\bf Y}=(Y,\F,\nu,\sigma)$, each $g\in L^{q}(Y)$ and each $x\in X_0$, the averages
$$\frac1{N}\sum_{n=0}^{N-1}f(\tau^nx)g(\sigma^ny)$$
converge $\nu$- almost everywhere.
\end{theorem}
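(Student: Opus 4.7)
The plan is the classical three-step strategy: universality reduction plus transference to the shift on $\Z$, convergence on a dense subclass of $g$, and a $\mu$-a.e.\ uniform maximal inequality controlling the bilinear averages. For the first step, by standard Rokhlin-tower and joining arguments, it suffices to prove the theorem with $(\mathbf{Y},\sigma,g)$ fixed to be a universal system (for instance, the Bernoulli shift on $\{-1,1\}^{\Z}$ with $g$ the zeroth-coordinate function); a.e.\ convergence for arbitrary $(\mathbf{Y},g)$ with $g\in L^{q}(\mathbf{Y})$ is then extracted by disintegrating along joinings. Calder\'on's transference principle identifies the bilinear average, evaluated at $y\in Y$, with a discrete convolution of the orbit sequence $a_n(x):=f(\tau^n x)$ against $b_n(y):=g(\sigma^{-n}y)$, so convergence and maximal bounds on $\mathbf{Y}$ reduce to their analogs on $\Z$.

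For the second step, decompose $L^2(\mathbf{Y})$ via the Jacobs--de Leeuw--Glicksberg splitting into an almost-periodic (Kronecker) factor and a weakly-mixing complement. For $g$ in the Kronecker factor, the spectral expansion $g(\sigma^n y)=\sum_j c_j(y)e^{2\pi i n \lambda_j}$ reduces convergence to convergence of $N^{-1}\sum_{n<N} f(\tau^n x)\,e^{-2\pi i n \lambda}$, and the Wiener--Wintner theorem supplies a single $\mu$-null set off which this limit exists \emph{for every} $\lambda\in[0,1)$. For the weakly-mixing part, a second-moment computation combined with a van der Corput inequality shows the averages tend to $0$ for $\mu$-a.e.\ $x$ and $\nu$-a.e.\ $y$. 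These two cases together cover a dense subclass of $L^q(\mathbf{Y})$.

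The third step is to establish, for $\mu$-a.e.\ $x$,
\[
\Bigl\|\sup_N\Bigl|\tfrac{1}{N}\sum_{n=0}^{N-1} f(\tau^n x)\,g(\sigma^n\cdot)\Bigr|\Bigr\|_{L^q(Y)}\lesssim_x \|g\|_{L^q(Y)}.
\]
By transference and Marcinkiewicz interpolation, the pivotal case is $q=2$, where Plancherel on $\Z$ identifies this inequality with the finiteness, for $\mu$-a.e.\ $x$, of the Fourier analog of the $M_2^*$ norm of the family
\[
m_N(\theta) := \frac{1}{N}\sum_{n=0}^{N-1} f(\tau^n x)\,e^{-2\pi i n \theta}, \qquad N\in\N.
\]
The standard ``dense class plus maximal inequality'' principle then upgrades the convergence from the second step to $\nu$-a.e.\ convergence for every $g\in L^q(\mathbf{Y})$ and every $x$ in the countable intersection $X_0$ of the full-measure sets produced in steps two and three.

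The main obstacle is the $M_2^*$ bound in the third step: the $m_N$ are random trigonometric polynomials that must be controlled as a \emph{family} in a Carleson-type norm, which cannot be extracted from individual $L^\infty$ bounds on each $m_N$ alone. The approach is a time-frequency decomposition of $m_N$ into wave packets, with near-orthogonality of their coefficients obtained through the spectral theorem applied to $(\mathbf{X},\tau)$, followed by metric-entropy estimates together with a Borel--Cantelli argument to ensure that the resulting bound holds $\mu$-a.e.\ uniformly over all truncation scales $N$. Theorem~\ref{main} of the present paper is precisely the Walsh-model prototype of this maximal multiplier bound, and furnishes the combinatorial blueprint on which the Fourier-analytic implementation required here is patterned.
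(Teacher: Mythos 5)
First, a point of comparison: the paper contains no proof of Theorem \ref{Bretthm}. It is Bourgain's Return Times Theorem, quoted from \cite{Bo5} purely as motivation; what the paper actually proves is Theorem \ref{main}, the Walsh model of the $M_2^*$ Carleson bound whose Fourier counterpart is the engine of the extension in \cite{DLTT}. So there is no in-paper argument to match yours against, and your proposal has to be judged on its own. Judged that way, it is a roadmap in the spirit of the modern harmonic-analysis proofs, but with genuine gaps. The most serious structural one is your first step: the claim that one may fix $({\bf Y},\sigma,g)$ to be a single universal system (Bernoulli shift, zeroth coordinate) and then recover arbitrary $({\bf Y},g)$, $g\in L^q$, ``by disintegrating along joinings'' is not a valid reduction, and no such argument is known or sketched. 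The entire difficulty of the theorem is the quantifier order: the set $X_0$ must be chosen before $({\bf Y},g)$ is revealed, and almost everywhere convergence against one particular $g$ on one particular system does not imply it for all $g$ on all systems. What Calder\'on transference legitimately gives is a reduction to weighted averages on $\Z$ with weights $f(\tau^n x)$; the conditions selecting the good $x$ must then be intrinsic to ${\bf X}$ (Wiener--Wintner-type conditions, a.e.\ finiteness of a maximal multiplier norm), which is how both the Bourgain--Furstenberg--Katznelson--Ornstein proof and \cite{DLTT} proceed.

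Second, the analytic heart of your argument, step three, is asserted rather than proved. The $\mu$-a.e.\ bound on the Fourier $M_2^*$ norm of the family $m_N(\theta)=N^{-1}\sum_{n<N}f(\tau^n x)e^{-2\pi i n\theta}$ is precisely the hard content of the return times problem in this approach; you label it ``the main obstacle'' and gesture at wave packets, entropy estimates and Borel--Cantelli, with Theorem \ref{main} as a blueprint. But Theorem \ref{main} is a Walsh-model statement and does not imply its Fourier analog: the passage from the model to the Fourier case (imperfect frequency localization of wave packets, the continuous analog of the weighted Bourgain maximal multiplier estimates of Section \ref{Bourgain}, the tree/size machinery for the operator ${\bf W}^{\operatorname{max}}$) is exactly where the work lies, and it is carried out in \cite{DLTT}, not here and not in your sketch. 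A smaller but real gap sits in step two: decomposing on the ${\bf Y}$ side, your weak-mixing/van der Corput argument as stated yields a null set in $x$ that depends on $({\bf Y},g)$, violating universality, and the second-moment computation only gives decay of $\int|A_N|^2d\nu$, hence convergence along lacunary subsequences; you must reformulate the orthogonality criterion intrinsically in $x$ (uniform control of the orbit autocorrelations of $f(\tau^n x)$, Wiener--Wintner style) and invoke the maximal inequality to upgrade to the full sequence. As written, the proposal is a plausible outline of the known strategy, not a proof.
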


In \cite{DLTT} we extend Bourgain's theorem to a larger range of $p$ and $q$. Our argument there relies on estimates like the one in Theorem \ref{Bretthm} for a model operator  which is the Fourier counterpart of ${\bf W}^{\operatorname{max}}$. We hope that our presentation here for the simpler Walsh model will ease the understanding of the the proof in \cite{DLTT}. 

We note that in order to prove Theorem ~\ref{main} it suffices to assume that the summation in the definition of the operator ${\bf W}^{\operatorname{max}}$ runs over a finite collection $\S\subset\S^{\operatorname{univ}}$ of bitiles, and to prove inequality ~\eqref{e.mainy1} with bounds independent on $\S$. We fix the collection $\S$ for the remaining part of the paper.

The argument relies on first splitting the collection of bitiles into structured collections called trees. The bitiles in each tree give rise to a modulated Littlewood-Paley decomposition. The model operator ${\bf W}^{\operatorname{max}}$ restricted to each  such a tree is estimated in Section \ref{generalfacts}, by using the Cald\'eron-Zygmund-type estimates from  Section \ref{var}.

In Section ~\ref{treeestimates} the operator ${\bf W}^{\operatorname{max}}f(x)$ is estimated pointwise, and it is shown that for each $x$ the contribution to ${\bf W}^{\operatorname{max}}f(x)$ comes from one stack of trees. Crucial to estimating this contribution is a weighted version of a maximal multiplier result due to Bourgain. This is proved in Section ~\ref{Bourgain}. The different pieces of the proof are put together in the last section of this paper.

\section{Variational norm estimates for averages}
\label{var}
Let $H$ be a separable Hilbert space equipped with a norm $|\cdot|_H$ and denote by $L^q(\R,H)$
the measurable functions on $\R$ with values in $H$ whose $q$-th power are integrable.
Let $\E(f|\D_k)$  denote the  conditional expectation with respect to the $\sigma$-algebra on $\R$ generated by the dyadic intervals of length $2^{k}$.
We include the case $k=\infty$ by setting  $\E(f|\D_\infty)=0$. From now on we will use the notation $$g_I(x)=\frac{1}{|I|^{1/2}}g(\frac{x-l(I)}{|I|})$$
for each  dyadic interval $I=[l(I),r(I))$.

\begin{lemma}[Jump inequality]
\label{jumplemma}
Consider $1<q<\infty$ and $f\in L^q(\R,H)$.
For each $x$ and $\lambda>0$ define the entropy number $M_\lambda(x)$ be the maximal length of a chain $\infty=k_0>k_1>k_2>\dots>k_{M_\lambda(x)}$ such that
for each $1\le m\le M_\lambda(x)$
$$|\E(f|\D_{k_m})(x)-\E(f|\D_{k_{m-1}})(x)|_H\ge \lambda.$$
Then
$$\|\lambda M_\lambda^{1/2}(x)\|_{L_x^q(\R,H)}\le C_q\|f\|_{L^q(\R,H)}$$
where the constant $C_q$ remains bounded for $q$ in any compact subinterval
of $(1,\infty)$.
\end{lemma}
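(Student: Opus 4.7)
The plan is to prove this vector-valued jump inequality in the spirit of the classical martingale jump estimates of Bourgain and Pisier--Xu: establish the $L^2$ case by a stopping-time orthogonality argument, and then extend to general $1<q<\infty$ by Calder\'on--Zygmund interpolation. Write $F_k:=\E(f|\D_k)$, so that $(F_k)$ is a (reverse) dyadic martingale with differences $d_jf:=F_{j-1}-F_j$. For the optimal chain $(k_m(x))$ realizing $M_\lambda(x)$, the pointwise bound
\[
\lambda^2 M_\lambda(x)\le\sum_{m=1}^{M_\lambda(x)}|F_{k_m(x)}(x)-F_{k_{m-1}(x)}(x)|_H^{2}
\]
reduces the task to controlling the $L^q$ norm of the square root of the right-hand side by $\|f\|_{L^q(\R,H)}$.

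For the endpoint $q=2$, each jump telescopes as $J_m(x)=\sum_{j\in B_m(x)}d_jf(x)$ with disjoint blocks $B_m(x):=(k_m(x),k_{m-1}(x)]$. The obstacle is that the partition $\{B_m(x)\}$ depends on $x$, so the standard orthogonality of the $d_jf$ cannot be invoked directly. The remedy, following Bourgain, is to classify the blocks by dyadic length $|B_m(x)|\in[2^s,2^{s+1})$: within each class, Cauchy--Schwarz inside a block loses a factor $2^s$, while summing the orthogonal contributions across scales essentially recovers $2^s$ copies of the martingale square function $Sf:=(\sum_j|d_jf|_H^2)^{1/2}$, producing a bound independent of $s$ after geometric summation. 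One concludes $\|(\sum_m|J_m|_H^2)^{1/2}\|_2\lesssim \|Sf\|_2\lesssim \|f\|_2$, which is the desired $L^2$ jump estimate.

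For the remaining range, the $L^2$ estimate combined with a Calder\'on--Zygmund decomposition at height $\lambda$ gives a weak-type $(1,1)$ inequality---the good part is handled by the $L^2$ bound, while the mean-zero atoms of the bad part contribute no $\lambda$-jumps outside a thickened bad set of measure $\lesssim\|f\|_1/\lambda$---and Marcinkiewicz interpolation then yields the estimate for $1<q\le 2$. The range $2\le q<\infty$ follows either by dualizing an appropriate linearization of the jump functional or, more economically, by invoking L\'epingle's $r$-variational bound $\|V^rF\|_q\lesssim\|f\|_q$ for some $r>2$ and running a Calder\'on--Zygmund extrapolation between this estimate and the $L^2$ jump bound. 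Uniformity of $C_q$ on compact subintervals of $(1,\infty)$ is inherited from the Marcinkiewicz and extrapolation constants. The main difficulty lies in the $L^2$ step, where the $x$-dependence of the optimal chain obstructs direct orthogonality and must be handled by the dyadic grouping of block lengths described above; all other ingredients are standard tools of dyadic martingale theory.
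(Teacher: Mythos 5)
Your high-level architecture matches the paper's (pointwise reduction to a square sum of jump increments, $L^2$ by orthogonality, weak $(1,1)$ by Calder\'on-Zygmund for $q<2$, a separate $q>2$ argument), and your $q<2$ step is sound, but the $L^2$ step and the $q>2$ step both have genuine gaps. At $q=2$, the block-length classification does not close: Cauchy-Schwarz in a block of length $\approx 2^s$ costs $2^s$, the blocks with a given $s$ are pairwise disjoint so the scale-$s$ contribution is at most $2^{s+1}$ times a slice of $(Sf(x))^2$, and summing over $s$ diverges because those slices are only bounded by $(Sf(x))^2$ in total while the weights $2^s$ are unbounded. No ``recovery of $2^s$'' occurs in the way you sketch. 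What actually works --- and what the paper does --- is to dominate $M_\lambda(x)$ by a \emph{greedy} $\lambda/2$-selection: $k_0(x)=\infty$ and $k_m(x)$ is the minimal (largest) scale with a $\lambda/2$-jump from $k_{m-1}(x)$. The paper's ``crucial additional property'' is that this greedy chain is nested and locally constant in $x$ at the appropriate scale, so the selected scales are genuine dyadic stopping times; consequently the square sum of increments along this chain is exactly a signed square function $(\sum_J |\sum_{I\in\I_J}\epsilon_I\langle f,h_I\rangle h_I|_H^2)^{1/2}$ over an adapted partition of the Haar scales, and the $L^2$ bound is pure orthogonality with no Cauchy-Schwarz loss. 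The ingredient you are missing is the stopping-time structure of the greedy chain, not a dyadic grouping of block lengths.

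For $q>2$, your appeal to L\'epingle's $V^r$ bound with $r>2$ cannot deliver the jump inequality, which is strictly stronger: $\lambda M_\lambda^{1/r}\le\|F_k\|_{V^r}$ gives only $\lambda M_\lambda^{1/2}\le\lambda^{1-r/2}\|F_k\|_{V^r}^{r/2}$, which blows up as $\lambda\to 0$, so there is no direct reduction; and ``Calder\'on-Zygmund extrapolation between this estimate and the $L^2$ jump bound'' is not a well-posed interpolation, since the two endpoints control different functionals. The paper's $q>2$ argument is a direct BMO estimate: one bounds the sharp maximal function of the signed square function $g$ (the left side of the key estimate \eqref{fancysf}) pointwise by the maximal function of $|f|$, and then applies the standard $L^q$ Fefferman--Stein bound for the sharp function. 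You should replace your $q>2$ argument with this sharp-function approach.
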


\begin{proof}
This result is well known, we briefly sketch the proof for completeness. 
First we establish that the number $M_\lambda(x)$ of $\lambda$-jumps
can be estimated by counting the $\lambda/2$-jumps in a greedy 
algorithmic way.
Let $k_0(x)=\infty$ and for $m\ge 1$ let $k_m(x)$ be the minimal number,
if it exists, such that
$|\E(f|\D_{k_m(x)})(x)-\E(f|\D_{k_{m-1}(x)})(x)|_H\ge \lambda/2$. Let $\tilde{M}_\lambda(x)$ be the maximal index for which $k_{\tilde{M}_\lambda(x)}$ exists. Define 
$$A_x=\{k_0(x),k_1(x),\ldots,k_{\tilde{M}_\lambda(x)}\}$$
$$\I_x=\{J\in\D:\,x\in J,\,|J|=2^{k}\,\hbox{for some}\,k\in A_x\}.$$
Then one easily checks that $M_\lambda(x)\le \tilde{M}_\lambda(x)$.
The crucial additional property of this greedy selection is 
that the initial parts of the sequence $k_m$ coincide for two
nearby values of $x$ until the value of $2^{k_m}$ gets smaller than 
the length of the smallest dyadic interval containing both values.

For each $x$ and each selected interval $J\in\I_x$,  let $\I_J$ be the collection of dyadic intervals contained in $J$ but not contained in any  interval from $\I_x$ of length smaller than $|J|$.
By vector valued Cald\'eron-Zygmund theory we have
\begin{equation}
\label{fancysf}
\|(\sum_{J\in \I_x} | \sum_{I\in \I_J} \epsilon_I \<f,h_I\> h_I(x)|_{H}^2)^{1/2}\|_{L^q(\R)}\le C_q \|f\|_{L^q(\R,H)}
\end{equation}
uniformly in all choices of signs $\epsilon_{I}\in\{-1,1\}$.
For $q=2$ this is an easy Hilbert space argument using orthogonality
of the functions $h_J$. For $q<2$ we use
a Cald\'eron-Zygmund decomposition of $|f|$ to obtain a weak endpoint at $q=1$
and then interpolate. For $q>2$ we use BMO techniques, i.e., we estimate
the sharp maximal function 
$$g^{\#}(x) =\sup_{x\in I} (\frac 1{|I|} \int_I (g-g_I)^2\,)^{1/2} = 
\sup_{x\in I} (\frac 1{|I|} \int_I g^2-g_I^2 \,)^{1/2}$$
of the function $g$ on the left hand side of (\ref{fancysf})
by the maximal function of $|f|$, 
and then use standard $L^q$ bounds for the sharp function and the maximal 
function.

Inequality (\ref{fancysf}) implies
$$\|(\sum_{1\le m\le \tilde{M}_\lambda(x)}
|\E(f|\D_{k_m(x)})(x)-\E(f|\D_{k_{m-1}(x)})(x)|_H^2)^{1/2}\|_q \le C_q \|f\|_{L^q(\R,H)}$$
and using that all jumps  are at least $\lambda/2$ proves the lemma.
\end{proof}

Define the $r$- variational norm of a sequence $g_k$ of elements in $H$
to be
$$\|g_k\|_{V^r(k)}:=
\sup_k|g_k|_{H}+ \sup_{M, k_0, k_1,\dots,k_M} (\sum_{m=1}^M |g_{k_m}-g_{k_{m-1}}|_H^r)^{1/r}$$
One may also define some ``weak'' variational norm
$$\|g_k\|_{V^{r,\infty}(k)}:=
\sup_k|g_k|_H+ \sup_{\lambda>0} \lambda M_{\lambda}^{1/r}.$$
where $M_\lambda$ is the maximal number of indices $k_0, k_1,\dots,k_M$
such that $|g_{k_m}-g_{k_{m-1}}|_{H}\ge \lambda$ for all $1\le m\le M$.
We have the usual estimate for the $V^r(k)$ norm in terms of $M_\lambda$
$$\|g_k\|_{V^r(k)}\le \|g_k\|_\infty+ 
C \int_0^\infty \lambda^{r} M_\lambda \, \frac{d\lambda}{\lambda}$$

The jump inequality in Lemma ~\ref{jumplemma} is almost a $V^{2,\infty}$ inequality, with the difference that in that inequality $\lambda$ is independent
of $x$, while in an honest $V^{2,\infty}$ inequality the parameter $\lambda$
may be maximized at every $x$ individually. Hence the jump inequality
is somewhat weaker than a $V^{2,\infty}$ inequality.
By integrating over all $\lambda$ and using Fubini one can abandon this
disadvantage of $\lambda$ being constant in $x$ and prove honest 
$V^r(k)$ norm estimates with $r>2$.

\begin{lemma}[Variational estimate]
\label{varlemma}
Let $1<q<\infty$ and $f\in L^q(\R,H)$. Then for $2<r<\infty$ we have
$$\|\|\E(f|\D_k)(x)\|_{V^r(k)} \|_{L_x^q}\le C_q(1+(r-2)^{-1}) \|f\|_{L^q(\R,H)}$$
where $C_q$ remains bounded on any compact interval of $(1,\infty)$.
\end{lemma}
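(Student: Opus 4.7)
The plan is to start from the rearrangement bound $\|g_k(x)\|_{V^r(k)}^r \le r\int_0^\infty \lambda^{r-1} M_\lambda(x) \, d\lambda$, which follows by writing $\sum_m |g_{k_m}-g_{k_{m-1}}|^r = r\int_0^\infty \lambda^{r-1} \#\{m:|g_{k_m}-g_{k_{m-1}}|>\lambda\}\,d\lambda$, bounding the counting function by $M_\lambda(x)$, and taking the supremum over chains. Setting $\mu_n(x) := \inf\{\lambda: M_\lambda(x) < n\}$, a second rearrangement identifies $\int_0^\infty \lambda^{r-1} M_\lambda \, d\lambda = \tfrac{1}{r}\sum_{n\ge 1} \mu_n(x)^r$, so it remains to control $\|(\sum_n \mu_n^r)^{1/r}\|_{L^q_x}$ by $\|f\|_{L^q}$. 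The key intermediate estimate is the strong bound $\|\mu_n\|_{L^q_x} \le C_q\, n^{-1/2}\|f\|_{L^q}$ for all $q \in (1,\infty)$, with $C_q$ bounded on compact subintervals.

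Since $\{\mu_n > \lambda\} = \{M_\lambda^{1/2} \ge \sqrt n\}$, Chebyshev applied to Lemma~\ref{jumplemma} immediately yields the weak-type bound $\|\mu_n\|_{L^{p,\infty}} \lesssim_p n^{-1/2}\|f\|_{L^p}$ at every $p \in (1,\infty)$. Marcinkiewicz interpolation between two such weak bounds at exponents $p_0 < q < p_1$, carried out by decomposing $f = f\mathbf{1}_{|f|\le c\alpha} + f\mathbf{1}_{|f|>c\alpha}$ with $c$ chosen proportional to $n^{1/2}$, upgrades this to the strong $L^q$ estimate with the same $n^{-1/2}$ scaling. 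The map $f\mapsto \mu_n(f)$ is only approximately sublinear, in the form $\mu_n(f+g)\le 2\mu_{\lceil n/2\rceil}(f)+2\mu_{\lceil n/2\rceil}(g)$, but the constant-factor shift in $n$ is harmless for the scaling.

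With $\|\mu_n\|_{L^q}\lesssim_q n^{-1/2}\|f\|_{L^q}$ established, the $L^q_x$ bound splits into two regimes. When $q\ge r$, generalized Minkowski gives $\|(\sum_n\mu_n^r)^{1/r}\|_{L^q_x}\le (\sum_n\|\mu_n\|_{L^q}^r)^{1/r}\lesssim \|f\|_{L^q}(\sum_n n^{-r/2})^{1/r}$, and the tail $\sum_n n^{-r/2}\lesssim (r-2)^{-1}$ produces the stated constant. When $1<q<r$ direct Minkowski is unavailable, so I first group the sum dyadically, using monotonicity of $\mu_n$ in $n$ to get $\sum_n \mu_n^r \lesssim \sum_{j\ge 0} 2^j\mu_{2^j}^r$, and then exploit the subadditivity $(a+b)^{q/r}\le a^{q/r}+b^{q/r}$ (valid since $q/r<1$) to pull the $q/r$-th power inside. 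This reduces matters to $\int \|g\|_{V^r}^q \, dx \lesssim \sum_j 2^{jq/r}\|\mu_{2^j}\|_{L^q}^q \lesssim \|f\|_{L^q}^q \sum_j 2^{-jq(1/2-1/r)}$; the geometric series is summable precisely because $r>2$, with value of order $r/(q(r-2))$, giving a factor $(r-2)^{-1/q}$ after taking $q$-th roots, which for $q>1$ is dominated by $1+(r-2)^{-1}$.

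The main obstacle is the strong-type upgrade for $\mu_n$: the weak bound is free from Chebyshev, but every subsequent step — both the Minkowski argument for $q\ge r$ and the dyadic argument for $q<r$ — genuinely requires $L^q$-integral control on $\mu_n$ (rather than only distributional control), so one cannot bypass the Marcinkiewicz step. The origin of the divergence as $r\to 2^+$ is then transparent: it is the tail $\sum_n n^{-r/2} \sim (r-2)^{-1}$ (or equivalently the geometric series $\sum_j 2^{-jq(r-2)/(2r)}$), which exactly matches the $(1+(r-2)^{-1})$ factor in the statement.
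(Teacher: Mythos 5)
Your proof is correct in substance, but it takes a genuinely different route from the paper, so let me record the comparison.

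The paper first reduces to $f=1_A$ (reinstating general $f$ only at the end via restricted-weak-to-strong Marcinkiewicz interpolation). With $f=1_A$ one has $M_\lambda=0$ for $\lambda>1$, so $\|\E(f|\D_k)\|_{V^r(k)}$ is dominated by an $L^r_\lambda(d\mu)$ norm of $(\lambda^2M_\lambda)^{1/r}$ against the finite measure $d\mu=\lambda^{r-3}d\lambda$ on $(0,1)$, whose total mass is $(r-2)^{-1}$. For $q=r$ one swaps the $x$ and $\lambda$ integrations by Fubini and invokes the jump inequality at the single exponent $2$; for $q>r$ one first applies H\"older on the finite measure space and then the jump inequality at exponent $2q/r$; for $q<r$ one does a Calder\'on--Zygmund decomposition to get a weak type bound. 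Your argument instead works with general $f$ throughout, replacing the $\lambda$-integral by its discrete rearranged form $\sum_n\mu_n^r$, and obtaining for each fixed $n$ a strong $L^q$ estimate $\|\mu_n\|_{L^q}\lesssim_q n^{-1/2}\|f\|_{L^q}$, after which summing in $n$ (Minkowski for $q\ge r$, dyadic regrouping plus $q/r$-subadditivity for $q<r$) gives the result with the correct $(r-2)^{-1}$-type dependence. The $\mu_n$ decomposition is a clean and natural alternative to the paper's $L^r(d\mu)$ viewpoint, and you correctly identify that a single weak bound at exponent $q$ is \emph{not} sufficient (the layer-cake integral over $\lambda$ diverges), which is precisely why the paper passes through characteristic functions and why you need a genuine Marcinkiewicz step for $\mu_n$.

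Two small technical points you gloss over, both fixable by the same greedy device the paper uses inside the jump lemma. First, the bound $\#\{m:|g_{k_m}-g_{k_{m-1}}|>\lambda\}\le M_\lambda(x)$ is not literally true for the chain-defined $M_\lambda$; the correct statement compares against $M_{\lambda/2}(x)$, obtained by running a greedy $\lambda/2$-selection along the chain (each greedy step can absorb at most one $\lambda$-jump). This is only a constant shift in $\lambda$ and does not affect the $L^q$ or $(r-2)^{-1}$ dependence. Second, your quasi-sublinearity $\mu_n(f+g)\le 2\mu_{\lceil n/2\rceil}(f)+2\mu_{\lceil n/2\rceil}(g)$ needs the same greedy step to be made rigorous (and comes out with constant $4$ rather than $2$): if $M_\alpha(f+g)\ge n$ then at least $n/2$ jumps along the chain have $f$-jump $\ge\alpha/2$, and the greedy argument applied to the restricted sequence upgrades those $n/2$ non-consecutive big jumps to $M_{\alpha/4}(f)\ge\lceil n/2\rceil$. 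Again the constants are immaterial. Finally, you should say a word about the $\sup_k|\E(f|\D_k)|_H$ term in the $V^r$ norm, which is dominated by the dyadic maximal function and hence harmless in $L^q$; the paper implicitly handles this the same way.
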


\begin{proof}
For each $x$ and $\lambda>0$ we denote by $M_{\lambda}(x)$ the entropy number of the collection $\{\E(f|\D_k)(x):k\in\Z\}.$ We first consider this inequality for $|f|$ being the characteristic function of a set $A$.
Then $M_\lambda=0$ for $\lambda>1$.
Hence we can write for $2<r<\infty$ 
$$\|\E(f|\D_k)(x)\|_{V^r(k)}
\le C \left(\int_0^1  \lambda^2  M_\lambda(x) \, \, \lambda^{r-2} 
\frac{d\lambda}{\lambda}\right)^{1/r}$$
The right hand term is an $L^r_\lambda(d\mu)$ norm of $(\lambda^2 M_\lambda)^{1/r}(x)$ 
with respect to an appropriate measure space of total mass $\|\mu\|=\int_{0}^1\lambda^{r-3}d\lambda=(r-2)^{-1}$.

In the case $q=r$ we get
\begin{align*}
\|\|\E(f|\D_k)(x)\|_{V^r(k)}\|_{L_x^r}  
&\le  C \|\| (\lambda^2 M_\lambda(x)) ^{1/r}\|_{L^r_{\lambda}(d\mu)}\|_{L_x^r}
\\&= C \|\| (\lambda^2 M_\lambda(x)) ^{1/r}\|_{L_x^r}\|_{L^r_{\lambda}(d\mu)}
\\&\le C \| |A|^{1/r} \|_{L^r_{\lambda}(d\mu)}
\\&\le C (r-2)^{-1/r} |A|^{1/r}. 
\end{align*}
Here we have used that
$$\int  \lambda^2  M_\lambda(x)  \, dx \le C |A|$$
from the jump inequality in Lemma ~\ref{jumplemma} applied with $q=2$.
We remark that $(r-2)^{-1/r}$ is bounded by $1+(r-2)^{-1}$.

If $q>r$, then we invoke H\"older's inequality 
$$\| (\lambda^2 M_\lambda(x)) ^{1/r}\|_{L_{\lambda}^r(d\mu)} 
\le (r-2)^{1/q-1/r}\|(\lambda^2 M_\lambda(x)) ^{1/r}\|_{L^q(d\mu)} $$
and
$$\int  (\lambda^2  M_\lambda(x))^{q/r}  \, dx \le C_{2q/r} |A|$$
and then proceed as above to obtain
$$\|\|\E(f|\D_k)(x)\|_{V^r(k)}\|_{L_x^q} \le C_{2q/r} (r-2)^{-1/r} |A|^{1/q} 
$$
Observe that $2<2q/r<q$, so we can write $C_q$ instead of $C_{2q/r}$.

If $q<r$, we will prove a weak type inequality
$$m\{x: \|\E(f|\D_k)(x)\|_{V^r(k)}\ge \nu\} \le C_q (1+(r-2)^{-1}) \nu^{-q} \|f\|_{L^q(\R,H)}^q.$$
Define
$$E=\{x:\sup_{x\in I\in\D}\frac{1}{|I|}\int_{I}|f|(y)dy\ge \nu\}.$$

 Outside $E$, we may replace $f$ by the good part $g$ of the
Cald\'eron-Zygmund decomposition of $f$ in order to calculate the 
value of $\E(f|\D_k)$. As usual we have 
$$\|g\|_{L^r(\R,H)}\le C \nu^{1-q/r} \|f\|_{L^q(\R,H)}^{q/r}.$$
Hence we have
\begin{align*}
m\{x: \|\E(f|\D_k)(x)\|_{V^r(k)}\ge \nu\}&\le |E|+
m\{x\in E^c: \|\E(f|\D_k)(x)\|_{V^r(k)}\ge \nu\}
\\&\le C_q \nu^{-q} \|f\|_{L^q(\R,H)}^q+ C \nu^{-r} 
\|\|\E(f|\D_k)(x)\|_{V^r(k)} \|_{L_x^r(E^c)}^r
\\&\le C_q (1+(r-2)^{-1}) \nu^{-q} \|f\|_{L^q(\R,H)}^q
\end{align*}
The Lemma now follows by Marcinkiewicz interpolation, passing from 
restricted weak type to strong type inequalities.

\end{proof}

\section{General facts about  Walsh time-frequency analysis}
\label{generalfacts}
The endpoints of the dyadic intervals will be  called \emph{dyadic points}.
For each dyadic interval $\omega=[a,b]$, the subintervals $\omega_1:=[a,\frac{a+b}{2}]$ and  $\omega_2:=[\frac{a+b}{2},b]$ will be referred to as the left and right children of $\omega$, respectively.
\begin{definition}
For two tiles (or bitiles) $P$ and $P'$ we write $P\le P'$ if $I_P\subseteq I_{P'}$ and $\omega_{P'}\subseteq\omega_P$.
\end{definition}
\begin{definition}
A tree with top $(I_{\T},\xi_{\T})$ is a collection of bitiles $\T\subseteq\S$ such that $I_{P}\subseteq I_{\T}$ and $\xi_{\T}\in\omega_{P}$ for each $P\in \T$.  An i-tree is a tree $\T$ such that $\xi_{\T}\in\omega_{P,i}$ for each $P\in \T$.
\end{definition}

\begin{definition}
Fix some $f:\R_+\to\R$.
For a  finite subset of bitiles $\S'\subseteq\S$ define its \emph{size} relative to $f$ as 
$$\size(\S'):=\sup_{\T}\left(\frac{1}{|I_{\T}|}\sum_{P\in \T}|\langle f, w_{P_1}\rangle|^2\right)^{\frac{1}{2}}$$ where the supremum is taken over all the $2$-trees $\T \subset \S'$. 
\end{definition}

We recall a few important results regarding the size.
\begin{proposition}
\label{g11a00uugg55}
For each $1<s<\infty$, each 2-tree $\T$  and  each $f\in L^s(\R_+)$ we have 
$$\left(\frac1{|I_\T|}{\sum_{P\in\T}|\langle f,w_{P_1}\rangle|^2}\right)^{1/2}\lesssim \inf_{x\in I_\T}M_s f(x).$$
\end{proposition}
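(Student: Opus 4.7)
My plan rests on an orthonormality statement for the family $\{w_{P_1}: P\in\T\}$ together with a Jensen-type conversion from an $L^2$ average to an $L^s$ average.

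The first step is to verify that for any two distinct $P, P'$ in the 2-tree $\T$ the rectangles $P_1 = I_P\times\omega_{P,1}$ and $P'_1 = I_{P'}\times\omega_{P',1}$ are disjoint in the time-frequency plane. If $I_P\cap I_{P'}=\emptyset$ this is immediate; otherwise dyadic nestedness forces (say) $I_P\subsetneq I_{P'}$, hence $\omega_{P'}\subsetneq\omega_P$, and the 2-tree condition $\xi_\T\in\omega_{P,2}\cap\omega_{P',2}$ then forces $\omega_{P'}\subseteq\omega_{P,2}$, so $\omega_{P',1}\subseteq\omega_{P,2}$ is disjoint from $\omega_{P,1}$. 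Disjointness of the rectangles translates via the standard Walsh orthogonality into $w_{P_1}\perp w_{P'_1}$, so $\{w_{P_1}\}_{P\in\T}$ is an orthonormal family supported in $I_\T$. Bessel's inequality then yields
$$\sum_{P\in\T}|\langle f, w_{P_1}\rangle|^2 \le \int_{I_\T}|f|^2.$$

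When $s\ge 2$ the proof is completed by dividing by $|I_\T|$ and applying Jensen's inequality, which produces $\left(\frac{1}{|I_\T|}\int_{I_\T}|f|^s\right)^{1/s}$ on the right; this is bounded by $M_s f(x)$ for every $x\in I_\T$, and taking the infimum over $x$ yields the conclusion in this range.

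The main obstacle is the range $1<s<2$, where Jensen goes in the wrong direction and the naive $L^2$ bound from Bessel is too crude. To push through I plan to perform a Calder\'on--Zygmund-type decomposition of $f$ on $I_\T$ at height $\lambda := M_s f(x_0)$ for some chosen $x_0\in I_\T$, writing $f = g + \sum_k b_k$ with $\|g\|_\infty\lesssim \lambda$ and each $b_k$ a mean-zero function supported on a bad dyadic $J_k\subseteq I_\T$ with $|J_k|^{-1}\int_{J_k}|f|^s\lesssim \lambda^s$. The good part is controlled by applying Bessel directly to $g$, yielding $\lesssim \lambda |I_\T|^{1/2}$. For the bad part the key is that each $w_{P_1}$ is piecewise constant on dyadic sub-intervals of $I_P$ at a scale determined by the Walsh frequency in $\omega_{P,1}$: the mean-zero property of $b_k$ forces $\langle b_k, w_{P_1}\rangle = 0$ whenever $J_k$ fits inside a level set of $w_{P_1}$, and the remaining overlap pairs can be summed by telescoping over the nested dyadic scales of the 2-tree. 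This is the technically delicate step, where the combinatorics of the 2-tree have to be carefully exploited together with the bad-interval collection coming from the Calder\'on--Zygmund decomposition.
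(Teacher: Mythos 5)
Your orthogonality verification for the lower tiles $\{P_1:P\in\T\}$ of a 2-tree is correct, Bessel gives $\sum_{P\in\T}|\langle f,w_{P_1}\rangle|^2\le\int_{I_\T}|f|^2$, and the Jensen step disposes of $s\ge 2$. Your Calder\'on--Zygmund idea is also exactly the right tool for $s<2$, and the cancellation you identify (mean-zero $b_k$ against $w_{P_1}$, which via $w_{P_1}e(\xi_\T\otimes\cdot)=\pm h_{I_P}$ amounts to mean-zero against a Haar function) is the correct mechanism. The problem is with how you set the CZ height and, more fundamentally, with the statement itself as written. Test the inequality on $f=w_{P^*_1}$ for a single bitile $P^*\in\T$ with $J:=I_{P^*}$ far down inside $I_\T$. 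Then by orthogonality the left side is exactly $|I_\T|^{-1/2}$, while $|f|=|J|^{-1/2}1_J$ and for $x$ in the half of $I_\T$ away from $J$ one has $M_sf(x)\sim |J|^{1/s-1/2}|I_\T|^{-1/s}$. The ratio of left to right is $(|I_\T|/|J|)^{1/s-1/2}$, which blows up as $|J|/|I_\T|\to 0$ whenever $1<s<2$. Consistent with this, if you run CZ at height $\lambda=M_sf(x_0)$ for an $x_0$ in the far half of $I_\T$, the bad interval swallows $J=I_{P^*}$, the cancellation disappears, and no ``telescoping'' can recover it. So the displayed inequality, with $\inf_{x\in I_\T}$, cannot be proved in this range because it is not true there.

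The estimate that Thiele's Lemma 1.8.1 gives, and the one actually needed in Section 6, replaces $\inf_{x\in I_\T}M_sf(x)$ by something of the form $\sup_{P\in\T}\inf_{x\in I_P}Mf(x)$ (already with the plain maximal function, $s=1$); the correct bound only improves with larger $s$, and this is compatible with the application because $\S_1$ is defined so that every $I_P$ meets $E^c$. With this right-hand side your CZ argument closes without any delicate step. Put $\lambda:=\sup_{P\in\T}\inf_{x\in I_P}Mf(x)$, set $g:=f\,e(\xi_\T\otimes\cdot)$ so $|g|=|f|$, and run the dyadic CZ decomposition of $g1_{I_\T}$ at height $\lambda$. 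The decisive point is that a bad dyadic $J_k$ can never contain an $I_P$ with $P\in\T$: each $I_P$ contains a point $x_0$ with dyadic $Mf(x_0)\le\lambda$, and $J_k\supseteq I_P\ni x_0$ would force $\frac{1}{|J_k|}\int_{J_k}|f|\le\lambda$, contradicting badness of $J_k$. Hence for every pair $(k,P)$ either $J_k\cap I_P=\emptyset$ or $J_k\subsetneq I_P$, and in both cases $\langle b_k,h_{I_P}\rangle=0$ (in the second because $h_{I_P}$ is constant on $J_k$ and $b_k$ has mean zero). Therefore $\langle g,h_{I_P}\rangle$ equals the inner product with the good part alone, and Bessel together with $\|\text{good part}\|_\infty\lesssim\lambda$ yields $\sum_{P\in\T}|\langle f,w_{P_1}\rangle|^2\lesssim\lambda^2|I_\T|$, uniformly in $s$. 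So: keep your mechanism, but choose the CZ height by the tiles rather than by a single $x_0\in I_\T$, and correct the statement accordingly.
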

\begin{proof}
See for example Lemma 1.8.1 in \cite{Th}.
\end{proof} 

The following Bessel type inequality, see for example \cite{MTT6}, will be used to organize  collections of bitiles into trees. 
\begin{proposition}
\label{Besselsineq}
Let $\S'\subseteq\S$ be a collection  of tiles and define $\Delta:=[-\log_2(\size(\S'))]$, where the size is understood with respect to some function $f\in L^2(\R_+)$. Then $\S'$ can be written as a disjoint union $\S'=\bigcup_{n\ge \Delta}\P_n,$ where $\size(\P_n)\le 2^{-n}$ and each $\P_n$ consists of a family $\F_{\P_n}$ of pairwise disjoint trees  satisfying 
\begin{equation}
\label{e:intp}
\sum_{\T\in\F_{\P_n}}|I_\T|\lesssim 2^{2n}\|f\|_2^2,
\end{equation} 
with bounds independent of $\S'$, $n$ and $f$. 
\end{proposition}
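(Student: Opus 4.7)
\medskip
\noindent\textbf{Proof plan.} The strategy is the standard greedy size-selection algorithm familiar from Walsh time-frequency analysis, combined with a Bessel inequality coming from the exact orthogonality of Walsh wave packets with disjoint frequency intervals. The definition of $\Delta$ guarantees $\size(\S')\le 2^{-\Delta}$, so no selection is needed at levels $n<\Delta$, and one proceeds inductively by dyadic size thresholds starting from $n=\Delta$.

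First, I would set $\S'_{\Delta}:=\S'$ and, for $n\ge\Delta$, extract $\P_n$ from $\S'_n$ by the following greedy procedure. As long as there is a $2$-tree $\T\subseteq\S'_n\setminus(\text{chosen so far})$ with $(\frac{1}{|I_\T|}\sum_{P\in\T}|\langle f,w_{P_1}\rangle|^2)^{1/2}>2^{-n-1}$, pick one whose top tile $P^*_\T$ has the (say) leftmost $\omega_{P^*_\T,2}$; let $\T_j^{\max}$ be the corresponding maximal $2$-tree in $\S'_n$ with top $P^*_{\T_j}$. Iterate until no such $2$-tree remains. Then set $\P_n:=\bigcup_j\T_j^{\operatorname{conv}}$, where $\T_j^{\operatorname{conv}}$ is the \emph{convex hull tree} consisting of all $P\in\S'_n$ with $P\le P^*_{\T_j}$; this gives a family of pairwise disjoint trees, and by construction $\size(\S'_n\setminus\P_n)\le 2^{-n-1}$, so iterating over $n$ decomposes $\S'$ as claimed with $\size(\P_n)\le 2^{-n}$.

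Second, I would establish the key Bessel estimate $\sum_j\sum_{P\in\T_j^{\max}}|\langle f,w_{P_1}\rangle|^2\lesssim \|f\|_2^2$. This uses the following orthogonality: within a single $2$-tree $\T_j^{\max}$ (with top frequency $\xi_{\T_j}\in\omega_{P,2}$ for all its bitiles), the lower frequency intervals $\omega_{P_1}$ are pairwise disjoint, so the $w_{P_1}$ are orthonormal; and the choice of the top $P^*_{\T_j}$ as extremal in the selection order ensures that across different $j$ the $\omega_{P_1}$'s remain pairwise disjoint as well (this is where the leftmost/maximality convention matters). Since Walsh wave packets with disjoint dyadic tiles are exactly orthogonal, Bessel's inequality gives the displayed bound. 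Combined with the lower bound $\sum_{P\in\T_j^{\max}}|\langle f,w_{P_1}\rangle|^2>2^{-2n-2}|I_{\T_j}|$ coming from the selection criterion, this yields
\[
\sum_j |I_{\T_j^{\operatorname{conv}}}|=\sum_j|I_{\T_j^{\max}}|\lesssim 2^{2n}\|f\|_2^2,
\]
which is \eqref{e:intp}.

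The main obstacle is the cross-tree orthogonality of the $w_{P_1}$'s, i.e., verifying that the greedy/maximality rule genuinely forces $\omega_{P_1}\cap\omega_{Q_1}=\emptyset$ whenever $P\in\T_j^{\max}$, $Q\in\T_k^{\max}$ with $j\ne k$. The standard argument is to observe that if two such lower frequency intervals intersected, they would be nested (being dyadic), and then the corresponding top bitiles would fit inside a common $2$-tree, contradicting either the maximality of the chosen tops or the selection order. Once this disjointness is established the rest is bookkeeping, and the decomposition follows exactly as in the analogous statement in \cite{MTT6}.
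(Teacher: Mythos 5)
The paper does not give a proof of this proposition; it simply cites \cite{MTT6} for the Bessel-type tree selection. Your proposal is the standard greedy size-increment selection with the orthogonality/Bessel estimate for the selected $2$-trees, which is precisely what the cited reference does, so at the level of strategy your route is the right one. However, there is a concrete error in the central orthogonality claim that needs to be repaired.

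You assert, both within a single $2$-tree and across the selected $2$-trees, that the frequency intervals $\omega_{P_1}$ are pairwise disjoint, and you base the Bessel step on this. That statement is false already inside one $2$-tree: if $P,Q\in\T$ are distinct bitiles with $|I_P|=|I_Q|$, then $\omega_P$ and $\omega_Q$ are dyadic intervals of the same length both containing $\xi_{\T}$, hence $\omega_P=\omega_Q$ and in particular $\omega_{P,1}=\omega_{Q,1}$. What is true, and what actually drives the argument, is that the lower \emph{tiles} $P_1=I_P\times\omega_{P,1}$ are pairwise disjoint (as rectangles): for equal scales they have disjoint time intervals, and for different scales one has $\omega_P\subseteq\omega_{Q,2}$ so the frequency intervals separate. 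It is this disjointness of tiles that makes the $w_{P_1}$ orthonormal, exactly as you then invoke in the phrase ``Walsh wave packets with disjoint dyadic tiles are exactly orthogonal'' --- but the intermediate sentences about the $\omega_{P_1}$ being disjoint contradict this and would mislead a reader trying to check the proof.

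The same correction is needed for the cross-tree step, and once made, your sketch of why the selection order forces it is too loose to be checked. The precise argument runs as follows. Select, at each stage, a top $P^*$ with minimal $\xi_{P^*}$ (a distinguished point of $\omega_{P^*,2}$, say its left endpoint) among candidate tops whose maximal $2$-tree in the remaining collection has size $>2^{-n-1}$, and remove the full maximal tree (your $\T_j^{\operatorname{conv}}$) with that top from the remaining collection. Now suppose $P\in\T_j^{(2)}$, $Q\in\T_k^{(2)}$ with $j<k$ and $P_1\cap Q_1\neq\emptyset$, $P_1\neq Q_1$. The tiles are comparable; if $P_1<Q_1$ then $\omega_Q\subseteq\omega_{P,1}$, hence $\xi_{\T_k}\in\omega_{Q,2}\subseteq\omega_{P,1}$ lies strictly below $\xi_{\T_j}\in\omega_{P,2}$, contradicting that $\xi_{\T_j}\le\xi_{\T_k}$ from the minimal-first selection. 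If instead $Q_1<P_1$ then $\omega_P\subseteq\omega_{Q,1}$, hence $\xi_{\T_j}\in\omega_{P,2}\subseteq\omega_Q$ and $I_Q\subseteq I_P\subseteq I_{\T_j}$, so $Q$ belongs to the maximal tree $\T_j^{\operatorname{conv}}$ and is removed before $\T_k$ is selected, again a contradiction. This is the step your phrase ``the corresponding top bitiles would fit inside a common $2$-tree'' gestures at but does not actually establish; as written, the argument does not visibly rule out either case. With these two corrections --- replace ``disjoint frequency intervals'' by ``disjoint tiles'' throughout, and supply the case analysis above for the cross-tree disjointness --- your proof is complete and matches the strategy of the reference that the paper cites.
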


Elementary computations show that for each tile $P=[2^in,2^i(n+1))\times[2^{-i}l,2^{-i}(l+1))$, each $l'\ge 0$ and each $\xi \in [2^{-i}l',2^{-i}(l'+1))$ we have  
$$w_{P}(x)=1_{I_{P}}(x)e(2^{-i}l\otimes x)$$ 
$$w_{P}(x)e(\xi\otimes x)=\epsilon(P,\xi)1_{I_{P}}(x)e(2^{-i}|l'-l|\otimes x)$$ 
where $\epsilon(P,\xi)\in\{-1,1\}$ depends on $P$ and $\xi$ but not on $x$. In particular, if $\T$ is a 2-tree and $P\in\T$ then
$$w_{P_1}(x)e(\xi_{\T}\otimes x)=\epsilon(P,\xi_{\T})w_{P'}(x)$$
where $P'=[2^in,2^i(n+1))\times[2^{-i},2^{-i+1})$,
and thus $w_{P_1}(x)e(\xi_{\T}\otimes x)$ is constant on both the left half and the right half of $I_P$. 
An immediate consequence is that for each $k\in\Z$ and each $a_{P}\in\R$
$$e(\xi_{\T}\otimes x)\sum_{P\in\T:|I_P|\ge 2^k}a_{P}w_{P_1}(x)=\E(e(\xi_{\T}\otimes \cdot)\sum_{P\in\T}a_{P}w_{P_1}|\D_{k-1})(x).$$
Since 
\begin{align*}
w_{P_1}(x)e(\xi_{\T}\otimes x)&=\epsilon(P,\xi_{\T})2^{-i/2}W_1(2^{-i}x-n)\\&=\epsilon(P,\xi_{\T})h_{I_P}(x)
\end{align*}
where $h$ is the Haar function, the classical theory of wavelets and John-Nirenberg's inequality imply the following.
\begin{theorem}
\label{BMOestimy}
Let  $\T$ be a 2-tree and assume $(a_{P})_{P\in\T}\in\R$ satisfy
$$(\frac1{|I|}\sum_{P\in \T\atop{I_{P}\subseteq I}}|a_{I_P}|^2)^{1/2}\le B,$$ 
for each dyadic interval $I$. Then for each $1<s<\infty$
$$\|e(\xi_{\T}\otimes \cdot)\sum_{P\in\T}a_{P}w_{P_1}\|_{BMO}\lesssim B$$
and
$$\|e(\xi_{\T}\otimes \cdot)\sum_{P\in\T}a_{P}w_{P_1}\|_{s}\lesssim_s B|I_{\T}|^{1/s}.$$
\end{theorem}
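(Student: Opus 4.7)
The plan is to exploit the identity proved in the excerpt just before the statement, namely that for a 2-tree $\T$ we have
$$e(\xi_\T\otimes x)\,w_{P_1}(x)=\epsilon(P,\xi_\T)\,h_{I_P}(x),$$
with $\epsilon(P,\xi_\T)\in\{-1,1\}$. Consequently the function $F(x):=e(\xi_\T\otimes x)\sum_{P\in\T}a_Pw_{P_1}(x)$ equals the Haar series $\sum_{P\in\T}b_P h_{I_P}$, where $b_P:=\epsilon(P,\xi_\T)a_P$ and hence $|b_P|=|a_P|$. In particular the Carleson-type hypothesis transfers verbatim to the coefficients $b_P$, so everything reduces to the classical statement: a Haar series whose coefficients satisfy a Carleson condition with constant $B$ lies in dyadic BMO with norm $\lesssim B$.

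For the BMO bound I would simply verify the dyadic oscillation estimate on each dyadic interval $I$. If $I\subseteq I_\T$, then any Haar function $h_J$ with $J\supsetneq I$ is constant on $I$, so it contributes only to $F_I$; the remainder $(F-F_I)\one_I$ is the orthogonal sum $\sum_{P\in\T,\,I_P\subseteq I}b_P h_{I_P}$, and the $h_{I_P}$ are $L^2$-normalized and orthogonal, giving $\int_I|F-F_I|^2=\sum_{I_P\subseteq I}|b_P|^2\le B^2|I|$. If $I\supseteq I_\T$ the support of $F$ lies inside $I_\T\subseteq I$ and $F$ has mean zero (every Haar function does), so $F_I=0$ and $\int_I|F-F_I|^2\le B^2|I_\T|\le B^2|I|$. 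Disjoint and non-nested cases are trivial or vacuous for dyadic intervals.

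For the $L^s$ estimate I would use that $F$ is supported on $I_\T$ with $\int F=0$, apply John--Nirenberg on the interval $I_\T$ to the dyadic BMO function $F$, and read off
$$\|F\|_{L^s(\R)}=\|F-F_{I_\T}\|_{L^s(I_\T)}\lesssim_s\|F\|_{BMO}\,|I_\T|^{1/s}\lesssim B|I_\T|^{1/s}.$$
The constant blows up appropriately as $s\to\infty$, which is the usual JN exponential bound and is harmless here since $s$ is fixed.

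The only substantive point is checking that the Haar-side argument really yields genuine (not just dyadic) BMO, but since the later applications in this paper go through John--Nirenberg on $I_\T$ and through $L^s$ bounds, the dyadic BMO statement is exactly what is needed; this is the standard route and does not present a real obstacle. In short, the whole argument is an application of the identity $w_{P_1}\cdot e(\xi_\T\otimes\cdot)=\pm h_{I_P}$ together with the textbook fact that Carleson-controlled Haar coefficients produce BMO functions.
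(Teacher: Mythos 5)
Your proof is correct and follows exactly the route the paper indicates: the paper offers no details beyond the remark that $e(\xi_\T\otimes\cdot)w_{P_1}=\pm h_{I_P}$ reduces the sum to a Haar series, after which ``the classical theory of wavelets and John--Nirenberg's inequality imply the following.'' You have simply spelled out that reduction, the Carleson-condition-to-dyadic-BMO computation, and the John--Nirenberg step on $I_\T$, all of which is the intended argument.
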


As an immediate  consequence of Theorem ~\ref{BMOestimy} and of Lemma ~\ref{varlemma} we obtain the following.

\begin{theorem}
\label{BMOestimy1}
Let  $\T$ be a 2-tree, $f:\R_+\to\R$ and let $\size(\T)$ denote the size of $\T$ with respect to the function $f$. 
Then for each $1<s<\infty$
$$\|\|\sum_{P\in\T\atop{|I_P|\le 2^k}}a_{P}w_{P_1}(x)\|_{V^r(k)}\|_{L_x^s}\lesssim_s \size(\T)|I_{\T}|^{1/s}.$$
\end{theorem}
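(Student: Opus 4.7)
The plan is to combine the telescoping identity from the paragraph preceding Theorem~\ref{BMOestimy} with the variational estimate of Lemma~\ref{varlemma}, after absorbing the modulation $e(\xi_\T\otimes\cdot)$ into a single base function whose $L^s$ size is controlled by Theorem~\ref{BMOestimy}. Interpreting $a_P=\langle f,w_{P_1}\rangle$, set
$$F(x) := e(\xi_\T\otimes x)\sum_{P\in\T}a_P w_{P_1}(x).$$
The displayed formula immediately before Theorem~\ref{BMOestimy}, with $k$ replaced by $k+1$, reads $e(\xi_\T\otimes x)\sum_{P\in\T,\,|I_P|\ge 2^{k+1}}a_P w_{P_1}(x) = \E(F|\D_k)(x)$. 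Subtracting this identity from $F$ and multiplying through by $e(\xi_\T\otimes x)$ yields the key decomposition
$$\sum_{P\in\T,\,|I_P|\le 2^k}a_P w_{P_1}(x) = e(\xi_\T\otimes x)\bigl[F(x) - \E(F|\D_k)(x)\bigr].$$

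Since $e(\xi_\T\otimes x)\in\{-1,+1\}$ and $F(x)$ do not depend on $k$, the $V^r(k)$-norm of the left-hand side is pointwise dominated by $|F(x)| + \|\E(F|\D_k)(x)\|_{V^r(k)}$: the telescoping differences coincide with those of $\E(F|\D_k)$, while the supremum component is absorbed via $|F-\E(F|\D_k)|\le|F|+|\E(F|\D_k)|$ together with the fact that $\sup_k|\E(F|\D_k)(x)|$ is already part of $\|\E(F|\D_k)(x)\|_{V^r(k)}$. Taking $L^s_x$ norms and applying Lemma~\ref{varlemma} (with $H=\R$, $q=s$) to the second term controls everything by $\lesssim_s\|F\|_{L^s}$.

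The final step is exactly Theorem~\ref{BMOestimy}, which gives $\|F\|_{L^s}\lesssim_s\size(\T)|I_\T|^{1/s}$. To invoke it one verifies the required BMO-type hypothesis with $B\lesssim\size(\T)$: for any dyadic $I$, the subcollection $\{P\in\T:I_P\subseteq I\}$ is again a 2-tree contained in $\T$ (with top $(I,\xi_\T)$ when $I\subseteq I_\T$, or equal to $\T$ when $I\supseteq I_\T$), so the definition of $\size(\T)$ directly yields $\frac 1{|I|}\sum_{P\in\T,\,I_P\subseteq I}|a_P|^2\le\size(\T)^2$. No step looks genuinely difficult; the whole content of the theorem is the algebraic identity that converts the modulated Walsh partial sums $\sum_{|I_P|\le 2^k}a_P w_{P_1}$ into dyadic martingale tails of a single function, after which Lemma~\ref{varlemma} and Theorem~\ref{BMOestimy} are applied as black boxes.
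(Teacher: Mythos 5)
Your proof is correct and is exactly the ``immediate consequence'' the authors intend: apply the displayed martingale identity preceding Theorem~\ref{BMOestimy} (shifted by one scale), pull out the unimodular factor $e(\xi_\T\otimes x)$ from the $V^r$ norm, invoke Lemma~\ref{varlemma} with $H=\R$ and $q=s$, and finish with Theorem~\ref{BMOestimy} after checking that the size hypothesis supplies the BMO constant $B=\size(\T)$. The only cosmetic point worth noting is that the implied constant also carries the $(1+(r-2)^{-1})$ factor from Lemma~\ref{varlemma}, a dependence on $r$ the paper suppresses in the $\lesssim_s$ notation.
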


\section{A generalization of a Lemma of Bourgain}
\label{Bourgain}
In this section we generalize a maximal multiplier result due to Bourgain \cite{Bo1}
We begin with the following easy consequence of Minkowski's inequality.
\begin{lemma}
\label{vartwoterms}
Let $\Xi$ be a finite set. Consider also two sequences $a_k$ and $b_k$ in the Hilbert space $l^{2}(\Xi)$ and define $a_k\star b_k\in l^{2}(\Xi)$ by $(a_k\star b_k)_{\xi}=(a_{k})_{\xi}(b_{k})_{\xi}$. Then
$$\|a_k\star b_k\|_{V^r(k)}\lesssim (\sum_{\xi\in\Xi}\|(a_{k})_{\xi}\|_{V^r(k)}^2\|(b_{k})_{\xi}\|_{V^r(k)}^2)^{1/2},$$
\end{lemma}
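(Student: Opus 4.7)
The plan is to split the $V^r$ norm into its two defining parts and estimate each separately. For the pointwise supremum, Cauchy--Schwarz in the Hilbert coordinate gives
\[
\sup_k\|a_k\star b_k\|_{l^2(\Xi)}^2 = \sup_k\sum_{\xi\in\Xi}|(a_k)_\xi|^2|(b_k)_\xi|^2 \le \sum_{\xi\in\Xi}\sup_k|(a_k)_\xi|^2\,\sup_k|(b_k)_\xi|^2,
\]
and since $\sup_k|(c_k)_\xi|\le\|(c_k)_\xi\|_{V^r(k)}$ for any sequence, this piece is already dominated by the right-hand side of the lemma.

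For the jump part, I would fix a finite increasing sequence $k_0<k_1<\dots<k_M$ and telescope by the discrete product rule
\[
a_{k_m}\star b_{k_m}-a_{k_{m-1}}\star b_{k_{m-1}} = (a_{k_m}-a_{k_{m-1}})\star b_{k_m}+a_{k_{m-1}}\star(b_{k_m}-b_{k_{m-1}}).
\]
By the triangle inequality in $l^r(m)$ it suffices to bound each of the two resulting sums separately. The first one equals
\[
\Bigl(\sum_m\Bigl(\sum_{\xi}|(a_{k_m})_\xi-(a_{k_{m-1}})_\xi|^2|(b_{k_m})_\xi|^2\Bigr)^{r/2}\Bigr)^{1/r},
\]
and Minkowski's inequality, valid for $r\ge 2$ (the natural range in view of Lemma~\ref{varlemma}), lets one interchange the $l^r(m)$ and $l^2(\Xi)$ norms. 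Pulling the factor $\sup_k|(b_k)_\xi|\le\|(b_k)_\xi\|_{V^r(k)}$ out of the inner $l^r(m)$ norm and recognising the rest as an $r$-variation of $(a_k)_\xi$ yields the bound $\bigl(\sum_\xi\|(a_k)_\xi\|_{V^r(k)}^2\|(b_k)_\xi\|_{V^r(k)}^2\bigr)^{1/2}$. The symmetric term from the telescoping is treated identically.

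Taking the supremum over all partitions and combining with the uniform piece completes the argument. There is no genuine obstacle: the only substantive step is the Minkowski interchange between the $l^r(m)$ and $l^2(\Xi)$ norms, which is precisely the ``easy consequence of Minkowski's inequality'' advertised in the sentence preceding the lemma.
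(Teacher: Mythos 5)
Your argument is correct, and since the paper leaves this lemma unproved (labelling it only as an ``easy consequence of Minkowski's inequality''), your telescoping product rule followed by the $\ell^{r/2}_m$--$\ell^1_\xi$ Minkowski interchange is precisely the kind of argument the authors were alluding to. One minor mislabel: the bound on the supremum piece uses only $\sup_k\sum_\xi\le\sum_\xi\sup_k$ together with submultiplicativity of the supremum for nonnegative sequences, not Cauchy--Schwarz, but the estimate itself is fine.
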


\begin{proposition}
\label{entropycorollary}
Let $H$ be a Hilbert space. Assume we are given
a set $A$ of linear functionals $f\to f^{(\alpha)}=\<f,e^{(\alpha)}\>$, $e^{(\alpha)}\in H$, of norm less than $\epsilon$ such that 
$$\sum_{\alpha\in A} |f^{(\alpha)}|^2 \le |f|^2$$
for each $f\in H$. Set $N=\epsilon^2 |A|$.
Let $f_k$ be a sequence of $H$-valued 
functions on $\R$ such that we have the variational inequality
$$\| \|f_k(x)\|_{V^r(k)} \|_{L_x^2}\le F.$$
Then we have
$$\| \| \sup_k | f_k^{(\alpha)}(x)|\|_{l^2(A)} \|_{L_x^2}\le 
C N^{r/4-1/2} F.$$
\end{proposition}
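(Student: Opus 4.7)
The plan is to reduce the proposition to a pointwise quadratic inequality for the scalar
$S(x):=\bigl(\sum_{\alpha\in A}\sup_k|f_k^{(\alpha)}(x)|^2\bigr)^{1/2}$
and then integrate in $x$. The starting point is the scalar pointwise identity
$$
\sup_k|g_k|^2 \le |g_{k_0}|^2 + 2\bigl(\sup_k|g_k|\bigr)\|g_k\|_{V^r(k)},
$$
valid for any scalar sequence $(g_k)$ and any index $k_0$, since $\sup_k|g_k|-|g_{k_0}|\le\sup_k|g_k-g_{k_0}|\le\|g_k\|_{V^r(k)}$ and the remaining factor $\sup_k|g_k|+|g_{k_0}|$ is at most $2\sup_k|g_k|$. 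Applied with $g_k=f_k^{(\alpha)}(x)$ and summed in $\alpha$, the Bessel hypothesis $\sum_\alpha|f_{k_0}^{(\alpha)}|^2\le|f_{k_0}|_H^2\le G(x)^2$ (with $G(x):=\|f_k(x)\|_{V^r(k)}$) handles the first sum, and Cauchy-Schwarz on the cross term yields
$$
S(x)^2 \le G(x)^2 + 2\,S(x)\,Q(x),
\qquad
Q(x):=\Bigl(\sum_\alpha \|f_k^{(\alpha)}(x)\|_{V^r(k)}^2\Bigr)^{1/2}.
$$
Solving the quadratic in $S$ gives $S\lesssim Q+G$, so it remains to show the pointwise bound $Q(x)\lesssim N^{r/4-1/2}\,G(x)$.

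The bound on $Q$ is the heart of the argument. The trivial estimate $\|f_k^{(\alpha)}\|_{V^r(k)}\le\epsilon\|f_k\|_{V^r(k)}$ together with summation in $\alpha$ already gives $Q\le N^{1/2}G$, which meets the claim whenever $r\ge 4$. For $r\in[2,4]$ the improvement uses the Bessel-Hölder inequality at the level of individual jumps: for each $m$,
$$
\sum_\alpha|\delta_m f^{(\alpha)}|^r \le \epsilon^{r-2}|\delta_m f|^{r-2}\sum_\alpha|\delta_m f^{(\alpha)}|^2 \le \epsilon^{r-2}|\delta_m f|^r.
$$
Summed over a common chain $K$, this gives $\sum_\alpha\sum_K|\delta f^{(\alpha)}|^r\le\epsilon^{r-2}G^r$; combined with the Hölder interpolation $\sum_\alpha X_\alpha^2\le|A|^{1-2/r}\bigl(\sum_\alpha X_\alpha^r\bigr)^{2/r}$ and the identity $\epsilon^2|A|=N$, one expects $Q^2\lesssim N^{(r-2)/r}G^2\le N^{r/2-1}G^2$ (the last inequality is $(r-2)^2\ge 0$), and square-rooting produces $Q\lesssim N^{r/4-1/2}G$.

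The main obstacle is that the supremum defining $V^r(f^{(\alpha)})$ is attained on a chain $K_\alpha$ that genuinely depends on $\alpha$, whereas the Bessel estimate above is valid only for a single chain common to all $\alpha$. This is resolved either by passing to the layer-cake formulation of $V^r$ in terms of the jump counts $M_\lambda(f^{(\alpha)})$ (the key point being that $\sum_\alpha M_\lambda(f^{(\alpha)})\lambda^2$ admits a clean Bessel bound, since each $\lambda$-jump contributes at least $\lambda^2$ to $|\delta f^{(\alpha)}|^2$ and hence, via Bessel, to $|\delta f|^2$), or by interpolating directly between the endpoint $r=2$ (where Bessel is sharp because $V^2$ is an $\ell^2$-style quantity for which the chain-dependence produces no loss) and the endpoint $r=4$ (where the trivial $N^{1/2}$ bound is already correct). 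Linear interpolation in the exponent of $N$ between these endpoints reproduces exactly $r/4-1/2$, and integration in $x$ using $\|G\|_{L^2_x}\le F$ closes the proof.
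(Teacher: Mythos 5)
Your reduction to the pointwise quadratic inequality $S^2\le G^2+2SQ$, hence $S\lesssim G+Q$, is clean and correct (fixing one $k_0$ and using Bessel only at that single index, then Cauchy--Schwarz in $\alpha$). This is a genuinely different organization from the paper, which instead proves the pointwise bound $S(x)^2\lesssim N^{r/2-1}G(x)^2$ directly by a metric-entropy/chaining argument: it builds a decomposition $f_k(x)=\sum_n c_n$ into $H$-valued increments $c_n\in C_{n,x}$ of size $\lesssim 2^{-n}$ and cardinality $\lesssim N_{2^{-n}}(x)$, and applies Bessel to each finite increment set $C_{n,x}$, which is the \emph{same} finite subset of $H$ for every $\alpha$.

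The gap in your argument is exactly the step you flag and then attempt to wave away: the bound $Q(x)\lesssim N^{r/4-1/2}G(x)$, i.e. $\sum_\alpha\|f_k^{(\alpha)}(x)\|_{V^r(k)}^2\lesssim N^{r/2-1}\|f_k(x)\|_{V^r(k)}^2$. Neither of your two proposed resolutions actually works. The ``layer-cake Bessel'' claim --- that $\sum_\alpha \lambda^2 M_\lambda(f^{(\alpha)})$ admits a clean Bessel bound --- fails because the $\lambda$-jumps of $f^{(\alpha)}$ occur across pairs $(k_{m-1}^{(\alpha)},k_m^{(\alpha)})$ that depend on $\alpha$; Bessel controls $\sum_\alpha|\langle g,e^{(\alpha)}\rangle|^2\le|g|_H^2$ for a single fixed $g\in H$, and there is no single vector $g=f_b-f_a$ whose Bessel bound dominates jumps living at different index pairs for different $\alpha$. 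Concretely, you cannot pass from $\sum_\alpha \lambda^2 M_\lambda^{(\alpha)}\le \sum_\alpha\sum_{m}|f^{(\alpha)}_{k_m^{(\alpha)}}-f^{(\alpha)}_{k_{m-1}^{(\alpha)}}|^2$ to anything controlled by $V^r(f)$ without first fixing the pairs. The ``interpolation between $r=2$ and $r=4$'' is also not a valid move: these are pointwise scalar inequalities with no interpolation family behind them, so there is no theorem that lets you linearly interpolate exponents of $N$; and even the $r=2$ endpoint $\sum_\alpha V^2(f^{(\alpha)})^2\lesssim V^2(f)^2$ is non-obvious for precisely the same $\alpha$-dependent-chain reason (the assertion that ``chain-dependence produces no loss'' at $r=2$ is asserted, not proved). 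By contrast, the paper's chaining construction supplies a universal, $\alpha$-independent family of increments, which is the structural ingredient that makes Bessel usable; your passage to the scalar variations $\|f_k^{(\alpha)}\|_{V^r}$ throws that structure away before Bessel is invoked, and that is where the argument breaks down.
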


A special example of a collection of linear functionals as in the
Lemma can be obtained by choosing the $e^{(\alpha)}$ to be an orthonormal
family of vectors and $\epsilon=1$. Our main application will involve a 
more general set of linear functionals.
We remark that  the difficulty in this proposition comes 
from the fact that we take the supremum in $k$ before we take the square sum of the components.

\begin{proof}
Fix $x$ and define $C_x=\{f_k(x)\}$ and $d(x)=\diam (C_x).$  
It suffices to prove the Proposition in the case  $C_x$ is finite and then to invoke the Monotone Convergence Theorem. Also, we can assume with no loss of generality that $C_x$ contains the origin ${\bf 0}$. For each $\lambda>0$ denote by $N_{\lambda}(x)$ the minimum number of balls with radius $\lambda$ and centered at elements of $C_x$, whose union covers $C_x.$ It is an easy exercise to prove that
\begin{equation}
\label{2uanoap}
\sup_{\lambda>0}\lambda N_{\lambda}^{1/r}(x)\lesssim_{r} \|f_k(x)\|_{V^r(k)},
\end{equation}
with the implicit constant depending only on $r$.
 For each $n\ge -\log_2(d(x))$, let $C_{n,x}$ be a  collection of elements of $(C_x-C_x)$ such that
\begin{align*}
|c|_{H}&\le 2^{-n+2}\;\;\hbox{for each}\;c\in C_{n,x},\\
\sharp C_n&\le N_{2^{-n}}(x)+1
\end{align*}
and each $c\in C_{n,x}$ can be written as 
\begin{equation}
\label{sumrepresent}
c=\sum_{n\ge -\log_2(d(x))}c_n\;\;\hbox{with}\;c_n\in C_{n,x}.
\end{equation}
Here is how $C_{n,x}$ is constructed. For each $n\ge -\log_2(d(x))$ define $B_{n,x}$ to be a collection of $N_{2^{-n}}(x)$ elements of $C_x$ such that the balls with centers in $B_{n,x}$ and radius $2^{-n}$ cover $C_x$. If $n=[-\log_2(d(x))]-1$ define $B_{n,x}=\{{\bf 0}\}$. For each $n\ge -\log_2(d(x))$ and each $c\in B_{n,x}$, choose an element $c'\in B_{n-1,x}$ such that the ball centered at $c$ and with radius $2^{-n}$ intersects  the ball centered at $c'$ and with radius $2^{-n+1}$. Define 
$$C_{n,x}:=\{c-c':c\in B_{n,x}\}\cup\{{\bf 0}\}.$$
Since $C_x$ is finite, for each $c\in C_x$ there is $n$ such that $c\in B_{n,x}$. To verify the representation ~\eqref{sumrepresent} for an arbitrary $c\in C_x$,  denote as above by $c'$ the element from $B_{n-1,x}$ associated with $c$, by $c''$ the element from $B_{n-2,x}$ associated with $c'$ and so on, and note that this sequence will eventually terminate with ${\bf 0}$. Hence we can write
$$c=(c-c')+(c'-c'')+\ldots .$$ Note also that by construction, each element of $C_{n,x}$ has norm at most $2^{-n+2}$.

This together with  inequality ~\eqref{2uanoap}  further allows us to write for each $x$ and $\alpha$
$$\sup_k |f_k^{(\alpha)}(x)|$$
$$\le \sum_{n\ge -\log_2(d(x))} \sup_{c_n\in C_{n,x}} |c_n^{(\alpha)}|$$
$$\lesssim \sum_{n\ge -\log_2(d(x))}\min\left(2^{-n}\epsilon,(\sum_{c_n\in C_{n,x}} |c_n^{(\alpha)}|^2)^{1/2}\right).$$
Summing over $\alpha$ we get
$$\sum_\alpha (\sup_k |f_k^{(\alpha)}(x)|)^2 $$
$$\lesssim \sum_{n\ge-\log_2(d(x))} \min(2^{-2n} N, \sum_{c_n\in C_{n,x}}|c_n|_{H}^2)$$
$$\lesssim  2^{-2n}\sum_{n\ge -\log_2(d(x))}\min(N, N_{2^{-n}}(x)).$$
Taking finally the $L^2$ norm in $x$ gives
$$\| \| \sup_k | f_k^{(\alpha)}(x) | \|_{l^2(A)} \|_{L_x^2}^2$$
$$\lesssim  \int \sum_{2^{-n}<d(x)/{N^{1/2}}} 2^{-2n} N \,dx +
\int\sum_{d(x)/N^{1/2}\le 2^{-n}\le d(x)} 2^{-2n} N_{2^{-n}}(x) \, dx$$
$$\lesssim \int d^2(x)\, dx+ \int\sum_{d(x)/N^{1/2}\le 2^{-n}} 2^{-(2-r)n}2^{-rn} N_{2^{-n}}(x)\,dx$$
$$\lesssim \int d^2(x)\, dx+ N^{r/2-1}\int d(x)^{2-r}\sum_{n} 2^{-rn} N_{2^{-n}}(x)\, dx$$
$$\lesssim N^{r/2-1}\int \|f_k(x)\|_{V^r(k)}^2\, dx$$
$$\lesssim N^{r/2-1}F^2.$$
This finishes the proof.
\end{proof}

\begin{corollary}
\label{weightedbourgain}
Let $2<r<\infty$.
Assume we are given a set $\Xi\subset \R_+$ of cardinality $N>1$
and assume that there is no dyadic interval of length $1$
which contains more than one point in $\Xi$.
For every $k\ge 0$ define $\Omega_k$ to be the union
of all dyadic intervals of length $2^{-k}$ which have nonempty intersection
with $\Xi$. For each $\omega\in \Omega_k$ let $\epsilon_\omega$ be a number
so that for every nested sequence of intervals $\omega_k\in \Omega_k$ 
we have 
\begin{equation}
\label{Varesteps}
\|\epsilon_{\omega_k}\|_{V^r(k)}\le \sigma.
\end{equation}
Define $$\Delta_kf(x)=(\sum_{\omega\in \Omega_k} \epsilon_\omega 
1_{\omega} \widehat{f})\check{\ }(x).$$
Then 
$$\|\sup_{k\ge 0}|\Delta_kf|\|_2\lesssim_r \sigma N^{r/4-1/2} \|f\|_2.$$
\end{corollary}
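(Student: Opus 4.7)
The plan is to reduce the Corollary to an application of Proposition~\ref{entropycorollary}. Set $H=\ell^{2}(\Xi)$ and define the $H$-valued sequence
\[
(f_{k}(x))_{\xi} \;=\; \epsilon_{\omega_{k}(\xi)}\, g_{k,\xi}(x),\qquad g_{k,\xi}(x):=(1_{\omega_{k}(\xi)}\widehat{f})^{\vee}(x),
\]
so that $\Delta_{k}f(x)=\sum_{\xi\in\Xi}(f_{k}(x))_{\xi}$. For the collection $A=\Xi$ I take the coordinate evaluations $e^{(\xi)}\in\ell^{2}(\Xi)$, which have unit norm and trivially satisfy the Bessel condition, giving $\epsilon=1$ and $N_{\text{prop}}=\epsilon^{2}|A|=N$, matching the $N$ in the conclusion.

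The variational input for the proposition is obtained as follows. Using the Walsh relation $1_{\omega_{k}(\xi)}(\eta)=1_{[0,2^{-k})}(\eta\oplus\xi)$, the change of variables $\eta'=\eta\oplus\xi$, and the factorisation of the Walsh character, one obtains the pointwise representation
\[
g_{k,\xi}(x) \;=\; e(x\otimes\xi)\,\E(\tilde f_{\xi}\mid\mathcal D_{k})(x),\qquad \tilde f_{\xi}(y):=e(y\otimes\xi)f(y).
\]
Because the modulation $e(x\otimes\xi)$ is $k$-independent and unimodular, $\|g_{k,\xi}(x)\|_{V^{r}(k)}=\|\E(\tilde f_{\xi}\mid\mathcal D_{k})(x)\|_{V^{r}(k)}$. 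Writing $f_{k}=\mathbf a_{k}\star\mathbf b_{k}$ with $(\mathbf a_{k})_{\xi}=\epsilon_{\omega_{k}(\xi)}$ and $(\mathbf b_{k})_{\xi}=g_{k,\xi}$, Lemma~\ref{vartwoterms} together with the hypothesis $\|(\mathbf a_{k})_{\xi}\|_{V^{r}(k)}\le\sigma$ gives
\[
\|f_{k}(x)\|_{V^{r}(k),\ell^{2}(\Xi)} \;\lesssim\; \sigma\Bigl(\sum_{\xi\in\Xi}\|g_{k,\xi}(x)\|_{V^{r}(k)}^{2}\Bigr)^{1/2}.
\]
Lemma~\ref{varlemma} applied coordinate-wise to the martingale $\E(\tilde f_{\xi}\mid\mathcal D_{k})$ controls each scalar $V^{r}(k)$-norm in $L^{2}_{x}$, and the hypothesis that no length-one dyadic interval contains more than one point of $\Xi$ forces the frequency pieces $\omega_{0}(\xi)$ to be pairwise disjoint, so that Plancherel controls the sum over $\xi$ without incurring a factor of $N$. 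This yields $F:=\|\|f_{k}\|_{V^{r}(k),\ell^{2}(\Xi)}\|_{L^{2}_{x}}\lesssim\sigma\|f\|_{2}$. Feeding $F$ into Proposition~\ref{entropycorollary} produces the desired bound once $\sup_{k}|\Delta_{k}f|$ is related back to the vector-valued maximal function supplied by the proposition.

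The main obstacle is precisely the interface between the scalar maximal function $\sup_{k}|\Delta_{k}f|$ and the $\ell^{2}(\Xi)$-valued one that the proposition delivers: the naive Cauchy--Schwarz comparison $|\sum_{\xi}(f_{k})_{\xi}|\le\sqrt{N}\,\|(f_{k})_{\xi}\|_{\ell^{2}(\Xi)}$ would introduce an extra factor of $\sqrt N$, and a direct scalar $V^{r}$ bound on $(\Delta_{k}f)_{k}$ produces the same loss in $F$. The entire point of Proposition~\ref{entropycorollary} is to beat these losses via entropy, so the application must be calibrated so that the $\sqrt N$ implicit in the Bessel structure cancels the $\sqrt N$ lost in passing from scalar to vector — this is why the weights $\epsilon_{\omega_{k}(\xi)}$ must be absorbed into the $V^{r}$-input through Lemma~\ref{vartwoterms} rather than encoded as external linear-functional data, and why the disjointness hypothesis on $\Xi$ is used decisively at the Plancherel step.
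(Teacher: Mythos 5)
Your framework is right — $H=\ell^2(\Xi)$, the vector sequence $f_k$ with $(f_k)_\xi=\epsilon_{\omega_{\xi,k}}g_{k,\xi}$, Lemma~\ref{vartwoterms} to absorb the weights, and the coordinate-wise application of Lemma~\ref{varlemma} followed by Plancherel (using the disjointness of the length-one intervals) to get $F\lesssim\sigma\|f\|_2$ — and this all matches the paper. But the choice of linear functionals is wrong, and you in fact identify the resulting gap yourself without resolving it. With the coordinate evaluations $e^{(\xi)}$, Proposition~\ref{entropycorollary} controls $\bigl\|\,\|\sup_k|(f_k(x))_\xi|\,\|_{\ell^2(\Xi)}\bigr\|_{L^2_x}$, whereas the quantity to be bounded is $\|\sup_k|\sum_\xi(f_k(x))_\xi|\|_{L^2_x}$; passing from the latter to the former via Cauchy--Schwarz costs a factor $\sqrt N$, and no amount of ``calibration'' recovers it. The sentence claiming that ``the $\sqrt N$ implicit in the Bessel structure cancels the $\sqrt N$ lost in passing from scalar to vector'' is not an argument — it is a description of the obstacle, and as written the proof does not close.

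The paper's choice of functionals is precisely what dissolves this obstacle, and it is a genuinely different (and essential) idea. Rather than coordinate projections, take $A=\{1,\dots,2^{-m}\}$ indexing the dyadic subintervals $J_\alpha$ of $[0,1)$ of a fine mesh $2^m$ on which all $w_\xi$ are constant, and set $e^{(\alpha)}=(2^{m/2}w_\xi(J_\alpha))_{\xi\in\Xi}$. Each functional has tiny norm $\epsilon=2^{m/2}N^{1/2}$, there are $2^{-m}$ of them, so $\epsilon^2|A|=N$ still; the Bessel bound $\sum_\alpha|g^{(\alpha)}|^2\le\sum_\xi|g_\xi|^2$ follows from orthogonality of the $w_\xi$ on $[0,1)$. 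The decisive point is the translation identity $(f_k(x))_\xi=w_\xi(y)(f_k(x\oplus y))_\xi$ for $y\in[0,1)$, which shows that $f_k^{(\alpha)}(x)=2^{m/2}\Delta_kf(x\oplus y)$ for any $y\in J_\alpha$; summing over $\alpha$, integrating over $y$, and using translation invariance of Lebesgue measure under $\oplus$ gives \emph{exactly} $\bigl\|\,\|\sup_k|f_k^{(\alpha)}(x)|\,\|_{\ell^2(A)}\bigr\|_{L^2_x}=\|\sup_k|\Delta_kf|\|_2$, with no $\sqrt N$ loss. In short: the scalar maximal function must be encoded into the vector-valued framework via sampling the Walsh superposition $\sum_\xi g_\xi w_\xi(y)$, not via coordinate projections — that is the missing idea in your proposal.
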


\begin{proof}
Fix $f\in L^{2}(\R_{+})$.
For each $k\ge 0$ we will denote by $\omega_{\xi,k}$ the unique dyadic interval in $\Omega_k$ such that $\xi\in\omega_{\xi,k}$, and by $w_\xi(x)=e(x\otimes \xi)$. Let $H$ be the $N$ dimensional Hilbert space $l^2(\Xi)$. Define the sequence of functions $f_k:\R\to H$, $k\ge 0$, by
\begin{align*}
(f_k(x))_{\xi}&=\epsilon_{\omega_{\xi,k}}(\widehat{f}1_{\omega_{\xi,k}})\check{\ }(x)\\&=\epsilon_{\omega_{\xi,k}}w_\xi(x)\E(fw_{\xi}|\D_{k})(x),
\end{align*}
and note that
\begin{equation}
\label{halkiui863}
(f_k(x))_{\xi}=w_{\xi}(y)(f_k(x\oplus y))_{\xi}
\end{equation}
for all $y\in [0,1)$.

To construct the vectors $e^{(\alpha)}$, 
choose some small negative integer $m$ so that all $w_\xi$ are constant on 
dyadic subintervals of $[0,1)$ of length
$2^{m}$. We write $w_\xi(J)$ for this constant value
on such an interval $J$. For each such interval $J_{\alpha}$, $\alpha\in A:=\{1,2,\ldots,2^{-m}\}$, define
$$e^{(\alpha)}=(2^{m/2}w_\xi(J_\alpha))_{\xi\in\Xi}.$$
The corresponding linear functionals are of norm $\epsilon= 2^{m/2} |\Xi|^{1/2}$. 
We also have $$\sum_\alpha  |g^{(\alpha)}|^2\le 
\int_0^1 |\sum_{\xi\in \Xi} g_\xi w_\xi(x)|^2 \, dx
\le \sum_\xi |g_\xi|^2,$$
for each $g\in H.$ In the last inequality we have used that the functions $w_\xi$ are orthogonal on $[0,1)$.
Hence the functionals satisfy the assumption of Proposition \ref{entropycorollary} with $N=\epsilon^2|A|=|\Xi|$.
We observe the following
\begin{align*}
\|\| \sup_k | f_k^{(\alpha)}(x)|\|_{l^2(A)} \|_{L_x^2}&=\int_{\R_+}\int_0^1 \sup_k|\sum_{\xi\in\Xi}\epsilon_{\omega_{\xi,k}}w_{\xi}(y)(\widehat{f}1_{\omega_{\xi,k}})\check{\ }(x)|^2dy\,dx\\&=\int_{\R_+}\int_0^1 \sup_k|\sum_{\xi\in\Xi}\epsilon_{\omega_{\xi,k}}w_{\xi}(y)(\widehat{f}1_{\omega_{\xi,k}})\check{\ }(x\oplus y)|^2dy\,dx\\&=\int_{\R_+} \sup_k|\sum_{\xi\in\Xi}\epsilon_{\omega_{\xi,k}}(\widehat{f}1_{\omega_{\xi,k}})\check{\ }(x)|^2\,dx\\&=\|\sup_{k\ge 0}|\Delta_kf|\|_2^2
\end{align*}
where the last equality is a consequence of ~\eqref{halkiui863}.
The corollary now follows from Proposition ~\ref{entropycorollary} once we verify that
$$\| \|f_k(x)\|_{V^r(k)} \|_{L_x^2}\lesssim_r \sigma\|f\|_{2}.$$
Note that for each $x$, $f_k(x)=a_{k,x}\star b_{k,x}$, where $(a_{k,x})_{\xi}=\epsilon_{\omega_{\xi,k}}$ and $(a_{k,x})_{\xi}=w_\xi(x)\E(fw_{\xi}|\D_{k})(x)$. The above estimate is now a consequence of Lemma ~\ref{varlemma}, Lemma ~\ref{vartwoterms} and inequality ~\eqref{Varesteps}.
\end{proof}

An argument very similar to the above also proves the following version of Corollary ~\ref{weightedbourgain}:
\begin{corollary}
\label{ttzitzijj6}
Consider a collection $\Omega$ of $N$ disjoint dyadic  intervals $\omega\in\R_+$. For each $\omega\in \Omega$ and each $k\in \Z$ let $\epsilon_{k,\omega}\in\R$ . Define 
$$\Delta_kf(x):=\sum_{\omega\in \Omega}\epsilon_{k,\omega}(\widehat{f}1_{\omega})\check{\ }(x).$$ 
Then for each $r>2$
$$\|\sup_k|\Delta_kf|\|_{L^2}\lesssim_r N^{r/4-1/2} \sup_{\omega\in \Omega}\|\epsilon_{k,\omega}\|_{V^r(k)}\|f\|_{L^2}.$$
\end{corollary}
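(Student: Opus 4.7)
The plan is to adapt the proof of Corollary~\ref{weightedbourgain} almost verbatim. Pick $\xi_\omega\in\omega$ for each $\omega\in\Omega$ and set $w_\omega=w_{\xi_\omega}$. Work in $H=\ell^2(\Omega)$ and define $f_k:\R_+\to H$ by $(f_k(x))_\omega=\epsilon_{k,\omega}(\widehat{f}1_\omega)\check{\ }(x)$, so that $\sum_\omega (f_k(x))_\omega=\Delta_kf(x)$. The first ingredient is the variational $L^2$ bound
$$F:=\|\|f_k(x)\|_{V^r(k)}\|_{L^2_x}\lesssim\sigma\|f\|_2,\qquad \sigma:=\sup_{\omega\in\Omega}\|\epsilon_{k,\omega}\|_{V^r(k)}.$$
Since $(\widehat{f}1_\omega)\check{\ }(x)$ is independent of $k$, Lemma~\ref{vartwoterms} applied with $(a_k)_\omega=\epsilon_{k,\omega}$ and $(b_k)_\omega=(\widehat{f}1_\omega)\check{\ }(x)$ (constant in $k$) yields the pointwise bound
$$\|f_k(x)\|_{V^r(k)}^2\lesssim\sum_\omega\|\epsilon_{k,\omega}\|_{V^r(k)}^2\,|(\widehat{f}1_\omega)\check{\ }(x)|^2,$$
and integrating in $x$, then using Plancherel together with the disjointness of the $\omega$'s, delivers the estimate.

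Next, I would build Walsh-type linear functionals on $H$. Choose $m\in\Z$ very negative and a dyadic fundamental domain $[0,T)$ such that (i) each $w_\omega$ is constant on every length-$2^m$ dyadic subinterval $J_\alpha\subset[0,T)$, (ii) the $w_\omega$'s for $\omega\in\Omega$ are mutually orthogonal on $[0,T)$, and (iii) the shifting identity $(\widehat{f}1_\omega)\check{\ }(x\oplus y)=w_\omega(y)(\widehat{f}1_\omega)\check{\ }(x)$ holds uniformly for $y\in[0,T)$ and $\omega\in\Omega$, which demands $T\le 2^{i(\omega)}$ for every $\omega$. Setting $A$ to index the intervals $J_\alpha$ and $e^{(\alpha)}_\omega=c\,w_\omega(J_\alpha)$ with $c=2^{m/2}/\sqrt{T}$, one verifies $\sum_\alpha|g^{(\alpha)}|^2\le|g|_H^2$, $\epsilon^2=|e^{(\alpha)}|_H^2=2^mN/T$, and $|A|=T\cdot 2^{-m}$, so that $N_{\mathrm{prop}}=\epsilon^2|A|=N$. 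By the shifting identity, $f_k^{(\alpha)}(x)=c\,\Delta_kf(x\oplus y_\alpha)$ for any $y_\alpha\in J_\alpha$; discretizing the $y$-integral on $[0,T)$, then applying Fubini and the invariance of $dx$ under $x\mapsto x\oplus y$ on $\R_+$, yields
$$\|\|\sup_k|f_k^{(\alpha)}|\|_{\ell^2(A)}\|_{L^2_x}^2=\|\sup_k|\Delta_kf|\|_{L^2}^2.$$
Proposition~\ref{entropycorollary} applied with $N_{\mathrm{prop}}=N$ and $F\lesssim\sigma\|f\|_2$ then closes the estimate.

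The delicate point, and where this proof genuinely differs from Corollary~\ref{weightedbourgain}, is the simultaneous choice of $T$ satisfying (ii) and (iii). Orthogonality of the Walsh characters on $[0,T)$ wants $T\ge 2^{i_{\max}}$ (so that frequencies in distinct dyadic $\omega$'s produce distinct Walsh functions), while the shifting identity wants $T\le 2^{i_{\min}}$. When all $\omega\in\Omega$ share a common scale these coincide, and the argument proceeds exactly as in Corollary~\ref{weightedbourgain}. In general one stratifies $\Omega=\bigsqcup_i\Omega_i$ by $i(\omega)=i$ and applies the same-scale argument to each $\Omega_i$ with its own $T_i=2^i$, then reassembles the bounds using the orthogonality $\sum_i\|P_{\Omega_i}f\|_2^2\le\|f\|_2^2$ afforded by the disjointness of the $\omega$'s across scales. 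The main technical point, which is hidden behind the phrase ``very similar argument'' in the statement, is to carry out this reassembly so as to produce the bound $N^{r/4-1/2}\sigma\|f\|_2$ without paying an unwanted factor depending on the number of scales present in $\Omega$.
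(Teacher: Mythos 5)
Your single-scale argument (all $\omega\in\Omega$ of a common length $2^{-j}$, with period $T=2^j$) is correct and is exactly the natural adaptation of the proof of Corollary~\ref{weightedbourgain}: the variational bound via Lemma~\ref{vartwoterms} plus Plancherel gives $F\lesssim\sigma\|f\|_2$, the functionals $e^{(\alpha)}$ satisfy the Bessel hypothesis with $\epsilon^2|A|=N$, and the $\oplus$-shift identity combined with measure preservation of $x\mapsto x\oplus y$ yields the exact equality $\|\|\sup_k|f_k^{(\alpha)}|\|_{\ell^2(A)}\|_{L^2_x}=\|\sup_k|\Delta_kf|\|_{L^2}$, so Proposition~\ref{entropycorollary} applies with $N_{\mathrm{prop}}=N$.

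The mixed-scale case, however, is a genuine gap in your write-up, and it is not a routine technicality. The tension you identify is real: pairwise distinctness of $w_{\xi_{\omega}}|_{[0,T)}$ requires $T$ at least of order $1/\min_\omega|\omega|$, while the shift identity $(\widehat{f}1_\omega)\check{\ }(x\oplus y)=w_{\xi_\omega}(y)(\widehat{f}1_\omega)\check{\ }(x)$ only holds for $y<1/|\omega|$, forcing $T\le 1/\max_\omega|\omega|$; these agree only when all lengths coincide. But your proposed stratification does not reassemble at the claimed cost. If $\Omega$ splits into $K$ scales of sizes $N_1,\dots,N_K$, then the triangle inequality in $L^2$ followed by Cauchy--Schwarz with the orthogonality of the $P_{\Omega_i}f$ (equivalently, the scale-by-scale Rademacher--Menshov recursion $A(\Omega)^2\le A(\Omega_1)^2+A(\Omega_2)^2$) gives at best
$$\Bigl\|\sup_k|\Delta_k f|\Bigr\|_2\lesssim \sigma\Bigl(\sum_i N_i^{r/2-1}\Bigr)^{1/2}\|f\|_2\le \sigma\,K^{1-r/4}N^{r/4-1/2}\|f\|_2$$
by H\"older. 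Since $K$ can be as large as $N$, this loses a factor up to $N^{1-r/4}$ and degenerates to the trivial Cauchy--Schwarz bound $\sigma N^{1/2}\|f\|_2$. So ``without paying an unwanted factor depending on the number of scales'' is precisely the missing idea, and no argument is offered for it.

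It is worth observing that the only place the paper invokes Corollary~\ref{ttzitzijj6} is inside the Rademacher--Menshov step in the proof of Proposition~\ref{prop:gh098cfhh}, where the collection fed into it is $\Omega_{k_{j+1}}$, a family of dyadic intervals all of the same length $2^{-k_{j+1}}$. So your single-scale proof covers everything the paper actually uses, and it is plausible that the phrase ``an argument very similar to the above'' was meant only for that case and the corollary is stated somewhat more generally than what is proved or needed.
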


It turns out that the results of corollaries ~\ref{weightedbourgain} and  ~\ref{ttzitzijj6} are not general enough for our applications, and so we prove the following more general version. Consider now an arbitrary set $\Xi=\{\xi_1,\ldots,\xi_N\}$ with no further restrictions on it, and for each $k\in \Z$ define $\Omega_k$ to be the set of all dyadic intervals of length $2^{-k}$ which contain some element of $\Xi$. We now associate to each $\omega\in \bigcup_k \Omega_k$ a number $\epsilon_\omega\in\R$ and define
\begin{equation}
\label{Bourgeneral}
\Delta_kf(x):=\sum_{\omega\in \Omega_k}\epsilon_{\omega}(\widehat{f}1_{\omega})\check{\ }(x).
\end{equation}

\begin{proposition}
\label{prop:gh098cfhh}
For each $r>2$ we have the inequality
$$\|\sup_{k}|\Delta_kf|\|_{2} \lesssim_r  N^{r/4-1/2} \sigma\|f\|_2,$$
where
$$\sigma=\sup_{n}\sup_{\xi_n\in\omega_k\in \Omega_k}\|\epsilon_{\omega_k}\|_{V^r(k)}.$$
\end{proposition}
\begin{proof}
It suffices as before to assume that the index $k$ runs through a finite interval $\{a,a+1,\ldots,b\}$ with $a,b\in\Z$.
We can find a sequence  $a=k_0<k_1<\ldots<k_L=b$ with $L\le N$, such that for each $0\le j\le L-1$, $\Omega_k$ has the same cardinality when $k_j\le k<k_{j+1}$. If $\widehat{f_j}:=(\sum_{\omega\in \Omega_{k_j}}1_{\omega}-\sum_{\omega\in \Omega_{k_{j+1}}}1_{\omega})\widehat{f}$, then the functions $f_j$ are pairwise orthogonal. 
We can now bound $\|\sup_k|\Delta_kf|\|_{2}$ by 

\begin{equation}
\label{-1-3-5-7ii}
\|\sup_{j}\sup_{k_j\le k<k_{j+1}}|(\sum_{\omega\in \Omega_{k_{j+1}}}\epsilon_{{\omega}(k)}1_{\omega}\sum_{j'>j}\widehat{f}_{j'})\check{\ } |\|_{2}+
\end{equation} 
\begin{equation}
\label{-1-3-5-7iii}
+\|\sup_{j}\sup_{k_j\le k<k_{j+1}}|(\sum_{\omega\in \Omega_{k}}\epsilon_{\omega}1_{\omega}\widehat{f}_{j})\check{\ } |\|_{2}.
\end{equation}
For each $\omega\in \Omega_{k_{j+1}}$ and each $k_j\le k<k_{j+1}$, ${\omega}(k)$ is defined to be the interval in $\Omega_k$ containing $\omega.$ Corollary ~\ref{weightedbourgain} and scaling invariance show that the term ~\eqref{-1-3-5-7iii} can be bounded by 
\begin{align*}
(\sum_{j}\|\sup_{k_j\le k<k_{j+1}}|(\sum_{\omega\in \Omega_{k}}\epsilon_{\omega}1_{\omega}\widehat{f}_{j})\check{\ } |\|_{2}^2)^{1/2}&\lesssim (\sum_{j}N^{r/2-1}\sup_{n}\sup_{\xi_n\in\omega_k\in \Omega_k\atop{k_j\le k<k_{j+1}}}\|\epsilon_{\omega_k}\|_{V^r(k)}^2\|f_j\|_{2}^2)^{1/2}\\&\lesssim \sigma N^{r/4-1/2}\|f\|_{2}.
\end{align*}
To estimate the term in ~\eqref{-1-3-5-7ii}, define the maximal operators $$O_j^{*}(h) :=\sup_{k_j\le k<k_{j+1}}|(\sum_{\omega\in \Omega_{k_{j+1}}}\epsilon_{{\omega}(k)}1_{\omega}\widehat{h})\check{\ } |. $$ 
We will  argue that
$$\|\sup_{1\le j\le N}O_{j}^{*}(\sum_{j\le j'\le N}f_{j'}) \|_{2}
\lesssim \sigma N^{r/4-1/2}(\sum_{j=1}^L\|f_j\|_{2}^2)^{1/2}.$$ It suffices to consider only dyadic values of $N$ so we will assume that $N=2^{M}$, for some $M\ge 0$.
For each $0\le m\le M$, denote by  $A_m$ the best constant for which the following inequality holds for all discrete dyadic intervals $J=(j_1,j_2]:=\{j_1+1,j_1+2,\ldots,j_{2}\}\footnote{$j_1$ and $j_2$ are of the form $a2^b$ with  $a,b\in\Z_{+}$}\subseteq \{1,2\ldots,2^M\}$ with $2^m$ elements

$$\|\sup_{j\in J}O_{j}^{*}(\sum_{j\le j'\le j_2}f_{j'}) \|_{2}\lesssim A_m(\sum_{j\in J}\|f_j\|_{2}^2)^{1/2}.$$ We will use a reasoning  similar to the one  in the proof of the Rademacher-Menshov inequality, to argue that $A_{M}\lesssim B_{M}$, where
$$B_{m}:= \sigma 2^{m(r/4-1/2)}.$$  We can  write  for each $0\le m\le M-1$ and each discrete dyadic interval $J=(j_1,j_2]\subseteq\{1,2,\ldots,2^{M}\}$ having $2^{m+1}$ elements and midpoint $j_3:=j_1+2^m$ 
\begin{equation*}
\|\sup_{j\in J}O_{j}^{*}(\sum_{j\le j'\le j_2}f_{j'}) \|_{2}^2
 \end{equation*}
\begin{equation*}
\le
\|\sup_{j_3+1\le j\le j_2}O_{j}^{*}(\sum_{j\le j'\le j_2}f_{j'}) \|_{2}^2+
\end{equation*}
\begin{equation*}
+\left(\|\sup_{j_1+1\le j\le j_3}O_{j}^{*}(\sum_{j \le j'\le j_3}f_{j'}) \|_{2}+\|\sup_{j_1+1\le j\le j_3}O_{j}^{*}(\sum_{j_3+1\le j'\le j_2}f_{j'}) \|_{2}\right)^2.
\end{equation*}
We then use the definition of $A_m$ for the first two terms above and Corollary ~\ref{ttzitzijj6} for the third one, to bound the sum above by 
\begin{equation*}
 A_{m}^2\sum_{j_3+1\le j'\le j_2}\|f_{j'}\|_{2}^2+ (A_{m}(\sum_{j_1\le j'\le j_3}\|f_{j'}\|_{2}^2)^{1/2}+CB_{m}(\sum_{j_3+1\le j'\le j_2}\|f_{j'}\|_{2}^2)^{1/2})^{2}
\end{equation*}
\begin{equation*}
\le (A_{m}+CB_{m})^2\sum_{j\in J}\|f_{j}\|_{2}^2.
\end{equation*}
We conclude that $A_{{m+1}}\le A_{m}+CB_{m}$ for each $0\le m \le M-1$, which together with the fact that $A_0=0$ proves that $A_{M}\lesssim B_{M}.$
\end{proof}

\begin{remark}
If in the above proposition we choose $\epsilon_{\omega}=1$ for each $\omega$, we recover the result of Bourgain from \cite{Bo1}, with a slightly larger dependence on $N$ of the bound. While Bourgain's bound is logarithmic in $N$, a bound of the form $N^{r/4-1/2}$ will suffice for our later applications, since we afford to take $r$ as close to 2 as we want.
\end{remark}

\section{Pointwise estimates outside exceptional sets}
\label{treeestimates}

\subsection{An estimate for a collection of 2-trees}
\label{type2tree}

Assume we have a collection $\S'\subseteq\S$ of bitiles which can be written as a not necessarily disjoint union of 2-trees 
$$\S'=\bigcup_{\T\in \F} \T.$$
We  shall assume that if $\T\in \F$, then $\T$ is indeed the maximal 2-tree in $\S'$ with the top  $(I_{\T},\xi_{\T})$, that is, all bitiles
in $P\in\S'$ which satisfy $I_{P}\subseteq I_{\T}$ and $\xi_{\T}\in \omega_{P,2}$ are in $\T$.

\begin{theorem}
\label{mainineqyy}
For each  $\beta\ge 1$, $\gamma>0$ and each $(a_P)_{P\in\S'}$ define the  exceptional sets
$$E^{(1)}=\{x:\sum_{\T\in\F}1_{I_{\T}}(x)>\beta\},$$
$$E^{(2)}=\bigcup_{\T\in\F}\{x:\|\sum_{P\in\T\atop{|I_P|< 2^{k}}}a_{P}w_{P_1}(x)\|_{V^r(k)}>\gamma\}.$$
Then  for each $x\notin E^{(1)}\cup E^{(2)}$ and each $r>2$ we have 
$$\|(\sum_{P\in\S'\atop{|I_P|\le 2^{k}}}a_{P}w_{P_1}(x)1_{\omega_{P,2}}(\theta))_{k\in\Z}\|_{M_2^*(\theta)}\lesssim_{r} \gamma\beta^{r/4-1/2}.$$
\end{theorem}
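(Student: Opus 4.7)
The plan is to apply Proposition~\ref{prop:gh098cfhh} with
\[
\Xi := \{\xi_\T : \T \in \F,\; x \in I_\T\}.
\]
Since $x \notin E^{(1)}$ this has cardinality $N \le \beta$. The first observation is that any bitile $P \in \S'$ contributing to $m_k$ must satisfy $x \in I_P$, hence belongs to some $\T \in \F$ with $x \in I_P \subseteq I_\T$, forcing $\xi_\T \in \Xi$; by the maximality assumption on $\F$ the tile $P$ is in every such $\T$. Moreover, for each dyadic $\omega$ there is at most one bitile $P(\omega)$ with $\omega_{P,2} = \omega$ and $x \in I_P$, because its time interval is the unique dyadic interval of length $1/|\omega|$ containing $x$.

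I would then reformulate the multiplier. For each dyadic $\omega$ meeting $\Xi$, set $\alpha_\omega := a_{P(\omega)} w_{P(\omega)_1}(x)$ when $P(\omega) \in \S'$ and zero otherwise, and define
\[
\epsilon_\omega := \sum_{\omega' \supseteq \omega,\, \omega' \cap \Xi \ne \emptyset} \alpha_{\omega'}, \qquad \tilde m_K(\theta) := \sum_{\omega \in \Omega_K} \epsilon_\omega 1_\omega(\theta),
\]
so that each $\tilde m_K$ has exactly the form demanded by Proposition~\ref{prop:gh098cfhh}. A short check then gives the identification
\[
m_k(\theta) = \tilde m_{\min(k,\, l^*(\theta))}(\theta),
\]
where $l^*(\theta)$ denotes the largest scale $L$ for which the length-$2^{-L}$ dyadic interval through $\theta$ meets $\Xi$. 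The variational hypothesis is then clean: any nested chain $\omega_k \in \Omega_k$ eventually contains a unique $\xi_{j^*} \in \Xi$, and by uniqueness of $P(\omega_l)$ together with tree maximality, $\alpha_{\omega_l}$ coincides with the coefficient of the unique scale-$l$ tile of $\T_{j^*}$ whenever $2^l \le |I_{\T_{j^*}}|$ and vanishes otherwise. Thus $\epsilon_{\omega_k}$ equals the truncated tree partial sum, whose $V^r$-norm is $\le \gamma$ by $x \notin E^{(2)}$.

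Proposition~\ref{prop:gh098cfhh} then yields $\|\sup_k|(\hat g \tilde m_k)^\vee|\|_{L^2} \lesssim_r N^{r/4-1/2} \gamma \|g\|_2 \le \beta^{r/4-1/2}\gamma\|g\|_2$ uniformly in $\|g\|_2 = 1$. To convert this into the $M_2^*$ bound on $(m_k)$, I would decompose
\[
(\hat g m_k)^\vee = (\hat g \tilde m_k)^\vee + T_k, \qquad T_k(y) := \bigl(\hat g\, \tilde m_{l^*(\theta)}(\theta)\, 1_{l^*(\theta) < k}\bigr)^{\!\vee}(y),
\]
and control $\sup_k|T_k|$ via the orthogonality of the disjoint frequency bands $\{l^*=L\}_L$. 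Because the uniform estimate $\|\tilde m_{l^*(\theta)}\|_\infty \le \gamma$ follows from the $V^r$-bound on $(\epsilon_\omega)$, a square-function together with a Doob / Rademacher--Menshov type maximal inequality gives $\|\sup_k|T_k|\|_2 \lesssim \gamma\|g\|_2$, completing the estimate.

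The main anticipated obstacle is precisely this last step. The multiplier $m_k$ has honest support on length-$2^{-k}$ dyadic intervals that miss $\Xi$ (where $\tilde m_k$ vanishes), so Proposition~\ref{prop:gh098cfhh} cannot be applied verbatim to $(m_k)_k$; one must separately extract the residual term $T_k$ and handle it by an orthogonal maximal argument along the $\{l^* = L\}$ decomposition. The rest of the argument is a clean reduction to the generalized Bourgain lemma once the cumulative weights $\epsilon_\omega$ are correctly identified.
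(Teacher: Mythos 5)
Your decomposition of $m_k$ and the plan to feed the $\Omega_k$-supported part into Proposition~\ref{prop:gh098cfhh} is exactly the paper's strategy, and the identity $m_k(\theta)=\tilde m_{\min(k,l^*(\theta))}(\theta)$ is correct (every $P\in\S'$ with $x\in I_P$ does have $\omega_{P,2}\cap\Xi\ne\emptyset$, via the tree it belongs to). There is, however, a real gap in your verification of the $V^r$-hypothesis for $(\epsilon_{\omega_k})_k$. You claim that $\alpha_{\omega_l}$ vanishes whenever $2^l>|I_{\T_{j^*}}|$, where $\xi_{j^*}$ is the limit of the nested chain. This is unjustified: a bitile $P$ with $\omega_{P,2}=\omega_l$, $x\in I_P$ and $|I_P|=2^l>|I_{\T_{j^*}}|$ can still lie in $\S'$ through a \emph{different} tree $\T'$ whose top frequency $\xi_{\T'}$ also sits in $\omega_l$ (for $l$ small, $\omega_l$ contains several points of $\Xi$). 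In that case $\alpha_{\omega_l}\ne 0$, $\epsilon_{\omega_k}$ is not a truncated partial sum of $\T_{j^*}$, and the condition $x\notin E^{(2)}$ does not apply to it directly. The paper avoids this by selecting the tree not from the chain limit but from the \emph{maximal} element $P_x$ of the totally ordered family $\O=\{P\in\S':x\in I_P,\ \omega_k\subseteq\omega_{P,2}\ \hbox{for some}\ k\}$; the maximality hypothesis on $\F$ then forces all of $\O$ into the single tree $\T_x$ containing $P_x$, so that $\epsilon_{\omega_k}=\sum_{P\in\T_x:\,|I_P|\le 2^k}a_Pw_{P_1}(x)$, and $x\notin E^{(2)}$ gives the desired $V^r$-bound.

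A secondary slip is the claim $\|\sup_k|T_k|\|_2\lesssim\gamma\|g\|_2$ for the residual. The bands $\{l^*=L\}$ are nonempty for arbitrarily many $L$ (points $\theta$ close to $\Xi$ have $l^*(\theta)$ arbitrarily large), so a Rademacher--Menshov bound incurs a logarithm in a quantity that is not controlled by $N$; and the partial sums are not a martingale, since the projections near different $\xi\in\Xi$ carry different modulations. The correct tool is a Bourgain-type maximal inequality: writing $1_{l^*<k}=1-\sum_{\omega\in\Omega_k}1_\omega$ and applying Proposition~\ref{prop:gh098cfhh} with $\epsilon_\omega\equiv 1$, one obtains $\|\sup_k|T_k|\|_2\lesssim N^{r/4-1/2}\gamma\|g\|_2$, which since $N\le\beta$ still yields the target $\gamma\beta^{r/4-1/2}$. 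The paper makes this explicit by factoring the residual as the fixed multiplier $\sum_{P\in\S'}a_Pw_{P_1}(x)1_{\omega_{P,2}}(\theta)$ (with $L^\infty$ norm $\le\gamma$, again by a single-tree argument) composed with $1-\sum_{\tilde\omega\in\Omega_k}1_{\tilde\omega}$, and then using submultiplicativity of the $M_2^*$ norm.
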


\begin{proof}
Fix $x$ not in the union of the exceptional sets  and let $\F_x$ be the family of all trees $\T\in\F$ with $x\in I_{\T}$. Define 
$$\Xi_{x}=\{\xi_{\T},\T\in\F_{x}\}.$$
For each $k\in\Z$ let $\Omega_{k}$ be the collection of dyadic frequency intervals of length $2^{-k}$  which contain an element of $\Xi_{x}$.
Let $\tilde{\Omega}_{k}$ be the collection of all children of
intervals in $\Omega_{k-1}$ that are not themselves in $\Omega_{k}$.
Observe that both $\bigcup_{k'} \tilde{\Omega}_{k'}$ and $\Omega_{k}\cup \bigcup_{k'\le k}\tilde{\Omega}_{k'}$ are  collections of pairwise disjoint intervals which cover $\R_+$ (with the possible exception of finitely many dyadic points). Moreover we can write 
$$ 
\sum_{P\in\S' :|I_P|< 2^k} a_Pw_{P_1}(x)1_{\omega_{P,2}}(\theta)= 
$$
\begin{equation*}
=\sum_{\omega \in \Omega_{k}} 1_{\omega}(\theta)
\sum_{P\in\S'\atop{|I_P|< 2^k,\;\omega\cap \omega_{s,2}\not=\emptyset}}a_P w_{P_1}(x)1_{\omega_{P,2}}(\theta)
\end{equation*}

\begin{equation*}
+
\sum_{k'\le k} 
\sum_{\omega \in \tilde{\Omega}_{k'}}
1_{\omega}(\theta) \sum_{P\in\S'\atop{|I_P|< 2^{k},\;\omega \cap \omega_{P,2}\not=\emptyset}}a_P w_{P_1}(x)1_{\omega_{P,2}}(\theta).
\end{equation*}
Indeed, if $1_\omega(\theta)w_{P_1}1_{\omega_{P,2}}(\theta)\not\equiv 0$ for some $\omega\in \Omega_{k}\cup \bigcup_{k'\le k}\tilde{\Omega}_{k'}$ and $P\in \S'$, then this implies that $\omega\cap\omega_{P,2}\not=\emptyset$. Moreover, when $\omega \in  \Omega_{k}$, this latter restriction together with  $|I_P|<2^{k}$ is equivalent with just asking that $\omega\subseteq \omega_{P,2}$. Similarly, when $\omega \in\bigcup_{k'\le k}\tilde{\Omega}_{k'}$ then $\omega_{P,2}\subsetneq \omega$ is impossible, which in turn makes the requirement  $|I_P|<2^{k}$ superfluous. Indeed  $\omega_{P,2}\subsetneq \omega$ would imply that $\omega_{P}\subseteq \omega$, contradicting the fact that  $\omega_P$ contains an element from $\Xi_{x}$ while   $\omega$ does not. Hence we can rewrite 
  
$$ 
\sum_{P\in\S' :|I_P|< 2^k} a_P w_{P_1}(x)1_{\omega_{P,2}}(\theta)=
$$
\begin{equation}
\label{mktrees1}
=\sum_{\omega \in \Omega_{k}} 1_{\omega}(\theta)
\sum_{P\in\S'\atop{\omega\subseteq \omega_{P,2}}}a_P w_{P_1}(x)1_{\omega_{P,2}}(\theta)
\end{equation}

\begin{equation}
\label{mktrees2}
+
\sum_{k'\le k} 
\sum_{\omega \in \tilde{\Omega}_{k'}}
1_{\omega}(\theta) \sum_{P\in\S'\atop{\omega\subseteq \omega_{P,2}}}
a_P w_{P_1}(x)1_{\omega_{P,2}}(\theta).
\end{equation}

The  multiplier in  ~\eqref{mktrees2} can be written more conveniently as  
$$(1-\sum_{\tilde{\omega}\in \Omega_{k}}1_{\tilde{\omega}})\left(\sum_{k'} 
\sum_{\omega \in \tilde{\Omega}_{k'}}
1_{\omega}(\theta) \sum_{P\in\S',\;\omega \subseteq\omega_{P,2}}
a_P w_{P_1}(x)1_{\omega_{P,2}}(\theta)\right)=$$ 
$$=(1-\sum_{\tilde{\omega}\in \Omega_{k}}1_{\tilde{\omega}})\sum_{P\in\S'}
a_P w_{P_1}(x)1_{\omega_{P,2}}(\theta),$$
given the fact that $(\bigcup_{I\in\Omega_{k}} I)^{c}=\bigcup_{k'\le k}\bigcup_{I\in\tilde{\Omega}_{k'}}I$ and $(\bigcup_{k'\le k}\tilde{\Omega}_{k'})\bigcap(\bigcup_{k'> k}\tilde{\Omega}_{k'})=\emptyset,$ modulo some dyadic points.
This maximal multiplier operator is now easily seen to be the composition of two operators. One is the identity minus Bourgain's maximal operator for which Proposition ~\ref{prop:gh098cfhh} provides good bounds. The second one is a linear operator associated with the multiplier $\sum_{P\in\S'}a_P w_{P_1}(x)1_{\omega_{P,2}}(\theta)$. To analyze the latter operator, we note that for each $\theta$ the contribution to the multiplier comes from a single tree. To see this note that the collection   

$$\X:=\{P\in\S':x\in I_{P},\; \theta\in \omega_{P,2}\}$$
is finite and totally ordered and hence it contains a maximum element $P_{\theta}$. If $\T_{\theta}\in\F$ is one of the 2-trees to which $P_{\theta}$ belongs, then by the maximality condition in the hypothesis it follows that $P\in\T_\theta$ for each $P\in\X$. Moreover, there is some $k$ such that
$$\sum_{P\in\S'}a_P w_{P_1}(x)1_{\omega_{P,2}}(\theta)=\sum_{P\in\T_{\theta}\atop{|I_P|\le 2^{k}}}a_P w_{P_1}(x).$$ 
By invoking  Proposition ~\ref{prop:gh098cfhh} and the fact that $x\notin E^{(1)}\cup E^{(2)}$ we get that 
$$
\left\|\left((1-\sum_{\tilde{\omega}\in \Omega_{k}}1_{\tilde{\omega}})\sum_{P\in\S'}a_P w_{P_1}(x)1_{\omega_{P,2}}\right)_{k\in\Z}\right\|_{M_2^{*}}$$ 
$$\le(1+\|(\sum_{\tilde{\omega}\in \Omega_{k}}1_{\tilde{\omega}})_{k\in\Z}\|_{M_2^{*}})\|\sum_{P\in\S'}a_P w_{P_1}(x)1_{\omega_{P,2}}(\theta)\|_{L^{\infty}(\theta)}$$ 
$$\lesssim\gamma\beta^{r/4-1/2}.$$

The term \eqref{mktrees1} is clearly of the form
$$
\sum_{\omega \in \Omega_k} 1_{\omega}(\theta)\epsilon_\omega 
$$
with
$$\epsilon_\omega=\sum_{P\in\S'\atop{\omega\subseteq \omega_{P,2}}}a_P w_{P_1}(x).$$
We claim that for each nested sequence of intervals $\omega_k\in \Omega_k$
 we have the variational norm estimate
\begin{equation}
\label{ff55rt78}
\|\epsilon_{\omega_k}\|_{V^r(k)}\lesssim_r \gamma.
\end{equation}
This follows immediately from the fact that all bitiles contributing to
$\epsilon_{\omega_k}$
belong to a single tree $\T\in \F$. Indeed, the collection 
$$\O:=\{P\in\S':x\in I_{P}, \;\omega_k\subseteq \omega_{P,2}\;\hbox{for some \;}k\}$$
is finite and totally ordered, so it has a maximum element $P_x$. If $\T_x\in\F_x$ is one of the 2-trees to which $P_x$ belongs, then from the maximality condition in the hypothesis it follows that $P\in\T_x$ for each $P\in\O$. Moreover, for each $k$
$$\{P\in\S':\omega_k\subseteq \omega_{P,2}\}=\{P\in\T_x: |I_P|\le 2^k\}.$$   
Thus ~\eqref{ff55rt78} and  Proposition ~\ref{prop:gh098cfhh} imply that 
$$\left\|\left(\sum_{\omega \in \Omega_{k}} 1_{\omega}(\theta)
\sum_{P\in\S'\atop{\omega\subseteq \omega_{P,2}}}a_P w_{P_1}(x)1_{\omega_{P,2}}(\theta)\right)_{k\in\Z}\right\|_{M_2^{*}(\theta)}\lesssim\gamma\beta^{r/4-1/2}.$$
\end{proof}

\subsection{An estimate for a collection of 1-trees}
\label{type1tree}

The discussion here is very similar to that for 2-trees.
Assume we have a collection $\S'$ of bitiles which can be written as a
not necessarily disjoint union of finitely many 1-trees 
$$\S'=\bigcup_{\T\in \F}\T.$$
We shall assume that for every $P\in\S'$  there does not exist a tree $\T\in \F$ with
$I_{P}\subset I_{\T}$ and $\xi_{\T}\in \omega_{P,2}$.
This assumption does in particular imply that the upper tiles
$P_2$ are pairwise disjoint. For assume not and $I_{P}\subsetneq I_{P'}$
and $\omega_{P',2}\subsetneq \omega_{P,2}$ 
for some $P,P'$, then it is easy to see that the upper tile $P_2$ 
violates the above assumption with respect to any tree to which $P'$ belongs.

\begin{theorem}
\label{mainineqyy12}
Let $(a_P)_{P\in\S'}$ satisfy 
\begin{equation}
\label{numaiastaggg}
\sup_{P\in\S'}\frac{|a_P|}{|I_P|^{1/2}}\le \sigma.
\end{equation}
For each  $\alpha\ge 1$  define the  exceptional set
$$E=\{x:\sum_{\T\in\F}1_{I_{\T}}(x)>\beta\}.$$
Then  for each $x\notin E$ and each $r>2$ we have 
$$\|(\sum_{P\in\S'\atop{|I_P|\le 2^{k}}}a_{P}w_{P_1}(x)1_{\omega_{P,2}})_{k\in\Z}\|_{M_2^*}\lesssim_{r} \sigma\beta^{r/4-1/2}.$$
\end{theorem}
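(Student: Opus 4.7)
I would mimic the proof of Theorem~\ref{mainineqyy} almost verbatim, exploiting the one essential simplification available in the 1-tree setting: since the upper tiles $\{P_2:P\in\S'\}$ are pairwise disjoint, for each fixed $x$ and each $\theta$ at most one bitile $P$ with $x\in I_P$ satisfies $\theta\in\omega_{P,2}$. Combined with the hypothesis \eqref{numaiastaggg} this immediately yields the pointwise bound $|F_x(\theta)|\le\sigma$ for the full multiplier
\[
F_x(\theta):=\sum_{P\in\S'}a_Pw_{P_1}(x)1_{\omega_{P,2}}(\theta),
\]
which replaces the variational estimates that played the role of the smallness assumption in the 2-tree case.

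To set up, I fix $x\notin E$, set $\F_x=\{\T\in\F:x\in I_\T\}$ and $\Xi_x=\{\xi_\T:\T\in\F_x\}$ (both of cardinality at most $\beta$), and define $\Omega_k$ and $\tilde\Omega_k$ exactly as in Theorem~\ref{mainineqyy}. The same dyadic partition identity splits $m_k=m_k^{(1)}+m_k^{(2)}$, where $m_k^{(1)}(\theta)=\sum_{\omega\in\Omega_k}1_\omega(\theta)\sum_{P\in\S':\omega\subseteq\omega_{P,2}}a_Pw_{P_1}(x)$ and $m_k^{(2)}$ is the analogous sum over $\bigcup_{k'\le k}\tilde\Omega_{k'}$; as in the 2-tree case, the condition $|I_P|\le 2^k$ is automatically encoded in $\omega\subseteq\omega_{P,2}$.

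For $m_k^{(2)}$ I would show that almost everywhere $m_k^{(2)}=F_x\cdot(1-\chi_k)$ with $\chi_k:=\sum_{\omega\in\Omega_k}1_\omega$. The nontrivial content of this identity is that whenever $\theta\in\omega_{P,2}$ and $\theta\notin\bigcup_{\omega\in\Omega_k}\omega$ one automatically has $|I_P|\le 2^k$: otherwise $\omega_P$ would be a dyadic interval of length $\le 2^{-k}$ containing $\xi_{\T(P)}\in\Xi_x$, forcing the length-$2^{-k}$ dyadic interval containing $\theta$ to meet $\Xi_x$, a contradiction. The elementary estimate $\|F\cdot n_k\|_{M_2^*}\le\|F\|_\infty\|n_k\|_{M_2^*}$, combined with Proposition~\ref{prop:gh098cfhh} applied with trivial weights $\epsilon_\omega\equiv 1$ and $\Xi=\Xi_x$ to bound $\|\chi_k\|_{M_2^*}\lesssim\beta^{r/4-1/2}$, then yields $\|m_k^{(2)}\|_{M_2^*}\lesssim_r\sigma\beta^{r/4-1/2}$.

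For $m_k^{(1)}$ I would apply Proposition~\ref{prop:gh098cfhh} directly with $\Xi=\Xi_x$ and $\epsilon_\omega=a_{P(\omega)}w_{P(\omega),1}(x)$, where $P(\omega)\in\S'$ is the unique bitile (if any) with $x\in I_{P(\omega)}$ and $\omega\subseteq\omega_{P(\omega),2}$. Along any nested sequence $\omega_k\in\Omega_k$, disjointness of the upper tiles forces $k\mapsto P(\omega_k)$ to be undefined up to some threshold and then constantly equal to a single bitile $P^\ast$: once $\omega_k\subseteq\omega_{P^\ast,2}$ every smaller $\omega_{k'}$ remains inside $\omega_{P^\ast,2}$ and cannot simultaneously lie in any other $\omega_{P',2}$. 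Thus $\epsilon_{\omega_k}$ has at most one jump, of size at most $\sigma$, so $\|\epsilon_{\omega_k}\|_{V^r(k)}\lesssim\sigma$ and the proposition delivers $\|m_k^{(1)}\|_{M_2^*}\lesssim_r\sigma\beta^{r/4-1/2}$. Adding the two pieces finishes the proof. The point requiring the most care is the almost everywhere identification $m_k^{(2)}=F_x(1-\chi_k)$; once that and the single-jump property for $\epsilon_{\omega_k}$ are verified, the pointwise $L^\infty$ control on $F_x$ replaces the variational control used in the 2-tree case and the remainder of the argument is structurally identical to Theorem~\ref{mainineqyy}.
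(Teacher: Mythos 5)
Your proof is correct and takes essentially the same route as the paper's, which is a two-sentence reduction to the argument of Theorem~\ref{mainineqyy}: because the upper tiles $P_2$ are pairwise disjoint, the collections $\X$ and $\O$ contain at most one bitile, giving $\|\sum_{P\in\S'}a_Pw_{P_1}(x)1_{\omega_{P,2}}(\theta)\|_{L^\infty(\theta)}\le\sigma$ and $\|\epsilon_{\omega_k}\|_{V^r(k)}\lesssim\sigma$, after which Proposition~\ref{prop:gh098cfhh} is invoked. You have simply written out in detail the verifications (the a.e.\ factorization of the $\tilde\Omega$-part and the one-jump property of $\epsilon_{\omega_k}$ along nested sequences) that the paper leaves implicit by referring back to the previous section.
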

\begin{proof}
As before we write
$$ 
\sum_{P\in\S' :|I_P|< 2^k} a_P w_{P_1}(x)1_{\omega_{P,2}}(\theta)=
$$
\begin{equation*}
=\sum_{\omega \in \Omega_{k}} 1_{\omega}(\theta)
\sum_{P\in\S'\atop{\omega\subseteq \omega_{P,2}}}a_P w_{P_1}(x)1_{\omega_{P,2}}(\theta)
\end{equation*}

\begin{equation*}
+
\sum_{k'\le k*} 
\sum_{\omega \in \tilde{\Omega}_{k'}}
1_{\omega}(\theta) \sum_{P\in\S'\atop{\omega\subseteq \omega_{P,2}}}
a_P w_{P_1}(x)1_{\omega_{P,2}}(\theta).
\end{equation*}

The argument continues as in the previous section.
Since the upper tiles  $P_2$ are pairwise disjoint,
the collections $\X$ and $\O$ contain at most one bitile. This observation together with ~\eqref{numaiastaggg} implies that  $$\|\sum_{P\in\S'}a_P w_{P_1}(x)1_{\omega_{P,2}}(\theta)\|_{L^{\infty}(\theta)}\le \sigma$$ and $$\|\epsilon_{\omega_k}\|_{V^r(k)}\lesssim_r \sigma$$
An application of Proposition ~\ref{prop:gh098cfhh} ends the proof. 
\end{proof}

\subsection{Arbitrary collection of trees}
\label{arbitrary}

Let $\S'$ be an arbitrary  collection of bitiles which can be written as a
not necessarily disjoint union of finitely many trees 
$$\S'=\bigcup_{\T\in \F}\T.$$
We next show that $\S'$ can be split into a collection of 2-trees like in Section ~\ref{type2tree} and a collection of 1-trees like in Section ~\ref{type1tree}.

For each $\T\in \F$ let $\T^{(2)}$ be the collection
of all bitiles $P\in \S'$ such that $I_P\subseteq I_{\T}$ and 
$\xi_{\T}\in \omega_{P,2}$.
If $\S^{(2)}$ denotes the union of all trees $\T^{(2)}$, then $\S^{(2)}$ 
qualifies as a collection of trees as in Section \ref{type2tree}.

For each $\T\in \F$ let $\T^{(1)}$ be the collection
of all bitiles $P\in \S'\setminus \S^{(2)}$ such that $I_P\subseteq I_{\T}$ and 
$\xi_{\T}\in \omega_{P,1}$.
If $\S^{(1)}$ be the union of all trees $\T^{(1)}$, then $\S^{(1)}$ 
qualifies as a collection of trees as in
Section \ref{type1tree}. The additional geometric assumption
is satisfied since we have exhausted all 2-trees first.

We will denote by $\F^{(2)}$ and $\F^{(1)}$ respectively the two families of trees that arise from the above procedure. An immediate consequence of the results in the previous two subsections is the following theorem.

\begin{theorem}
\label{mainineqyy123}
Let $(a_P)_{P\in\S'}$ satisfy 
\begin{equation*}
\sup_{P\in\S'}\frac{|a_P|}{|I_P|^{1/2}}\le \sigma.
\end{equation*}
For each  $\beta\ge 1$ and $\gamma>0$ define the  exceptional sets
$$E^{(1)}=\{x:\sum_{\T\in\F}1_{I_{\T}}(x)>\beta\},$$
$$E^{(2)}=\bigcup_{\T\in\F^{(2)}}\{x:\|\sum_{P\in\T\atop{|I_P|< 2^{k}}}a_{P}w_{P_1}(x)\|_{V^r(k)}>\gamma\}.$$
Then  for each $x\notin E^{(1)}\cup E^{(2)}$ and each $r>2$ we have 
$$\|(\sum_{P\in\S'\atop{|I_P|\le 2^{k}}}a_{P}w_{P_1}(x)1_{\omega_{P,2}})_{k\in\Z}\|_{M_2^*}\lesssim_{r} (\sigma+\gamma)\beta^{r/4-1/2}.$$
\end{theorem}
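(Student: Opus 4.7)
The plan is to reduce Theorem \ref{mainineqyy123} directly to Theorems \ref{mainineqyy} and \ref{mainineqyy12} by means of the decomposition $\S' = \S^{(2)} \cup \S^{(1)}$ (disjoint) constructed in Section \ref{arbitrary}. First I would observe that the $M_2^*$ norm is subadditive: for any two sequences of multipliers $(m_k)$ and $(n_k)$, the pointwise triangle inequality in $L_\theta^\infty$ followed by the triangle inequality in $L_x^2$ yields $\|(m_k+n_k)\|_{M_2^*(\theta)} \le \|(m_k)\|_{M_2^*(\theta)} + \|(n_k)\|_{M_2^*(\theta)}$. Applied to the partition $\S' = \S^{(2)} \cup \S^{(1)}$, this reduces the statement to establishing a bound of $\gamma\beta^{r/4-1/2}$ for the $M_2^*$ norm over $\S^{(2)}$ and a bound of $\sigma\beta^{r/4-1/2}$ for the $M_2^*$ norm over $\S^{(1)}$, whenever $x\notin E^{(1)}\cup E^{(2)}$.

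For the $\S^{(2)}$ part I would invoke Theorem \ref{mainineqyy} applied to the family $\F^{(2)}$ of 2-trees. By construction each $\T^{(2)}\in\F^{(2)}$ is the maximal 2-tree in $\S^{(2)}$ with top $(I_\T,\xi_\T)$, so the maximality hypothesis of Theorem \ref{mainineqyy} is satisfied. Since the tops in $\F^{(2)}$ are inherited from $\F$, the first exceptional set in Theorem \ref{mainineqyy} is contained in our $E^{(1)}$, and the second is exactly our $E^{(2)}$. Hence outside $E^{(1)}\cup E^{(2)}$ Theorem \ref{mainineqyy} delivers the desired bound $\gamma\beta^{r/4-1/2}$.

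For the $\S^{(1)}$ part I would invoke Theorem \ref{mainineqyy12} applied to the family $\F^{(1)}$ of 1-trees. The geometric assumption of Section \ref{type1tree}, namely that no $P\in\S^{(1)}$ admits $\T\in\F^{(1)}$ with $I_P\subsetneq I_\T$ and $\xi_\T\in\omega_{P,2}$, holds by construction since all 2-trees had already been harvested into $\S^{(2)}$. The size hypothesis \eqref{numaiastaggg} on $(a_P)$ is assumed on all of $\S'$ and thus holds on $\S^{(1)}$, and the exceptional set of Theorem \ref{mainineqyy12} is contained in $E^{(1)}$ by the same reasoning. Therefore Theorem \ref{mainineqyy12} yields the bound $\sigma\beta^{r/4-1/2}$, and summing the two contributions via the subadditivity of the $M_2^*$ norm completes the proof.

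I anticipate no serious obstacle here: all the substantive work (the weighted Bourgain multiplier estimate in Proposition \ref{prop:gh098cfhh}, the variational tree estimates of Section \ref{generalfacts}, and the single-type-tree estimates themselves) has already been done in the preceding sections. The only thing to verify is that the maximality and geometric hypotheses of the two prior theorems transfer correctly to the sub-collections $\S^{(2)}$ and $\S^{(1)}$, and this is precisely what the greedy construction in Section \ref{arbitrary}—extracting all 2-trees first—was designed to guarantee.
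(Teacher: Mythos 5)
Your proposal is correct and is essentially the paper's own (implicit) argument: the paper presents Theorem \ref{mainineqyy123} as "an immediate consequence of the results in the previous two subsections," meaning precisely the split $\S'=\S^{(2)}\cup\S^{(1)}$, subadditivity of $M_2^*$, and application of Theorems \ref{mainineqyy} and \ref{mainineqyy12} with the hypothesis checks you carried out. One small imprecision: the subadditivity of $M_2^*$ does not come from a triangle inequality in $L^\infty_\theta$ (there is no such norm in the definition), but rather from linearity of the map $m\mapsto(\widehat{g}m)\check{\ }$ in the multiplier $m$, the triangle inequality for $\sup_k$, and the triangle inequality in $L^2_x$; the conclusion is nonetheless correct.
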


\section{Main argument}
\label{section:last}

In this section we present the proof of Theorem ~\ref{main}. 
For each collection of bitiles $\S'\subseteq \S$ define the following operator.
$$
V_{\S'}f(x)=\left\|\left(\sum_{P\in \S'\atop{|I_P|< 2^k}}\langle f,w_{P_1}\rangle w_{P_1}(x)1_{\omega_{P,2}}(\theta)\right)_{k\in\Z}\right\|_{M_{2}^{*}(\theta)}. 
$$

Note that for each $\S'$ the operator $V_{\S'}$ is sublinear as a function of $f$. Also, for each $f$ and $x$ the mapping  $\S'\to T_{\S'}f(x)$ is sublinear as a function of the bitile set $\S'$. We will prove in the following that
\begin{equation}
\label{en.6}
m\{x:V_{\S}1_{F}(x)\gtrsim \lambda\}\lesssim_p \frac{|F|}{\lambda^p},
\end{equation}
for each $F\subseteq \R_+$ of finite measure, each $\lambda>0$ and each $1<p<\infty.$  Then, by invoking the Marcinkiewicz interpolation theorem and restricted weak type interpolation we get for each $1<p<\infty$ that
$$\|V_{\S}f\|_{p}\lesssim_p\|f\|_p.$$

Fix $F$ and $\lambda$. 
We first prove ~\eqref{en.6} in the case $\lambda\le 1$. 
Define the  first exceptional set
$$E:=\{x:M_{p}1_{F}(x)\ge \lambda\}$$
and note that $|E|\lesssim \frac{|F|}{\lambda^p}.$ Since the range of $p$ is open, it thus suffices to prove that for each $\epsilon>0$
\begin{equation}
\label{en.11}
m\{x\in \R: V_{\S_1}1_F(x)\gtrsim \lambda^{1-\epsilon}\}\lesssim_{\epsilon,p}\frac{|F|}{\lambda^p},
\end{equation}
where
$$
\S_1=\{P\in\S:I_P\cap E^{c}\not=\emptyset\}.
$$
Proposition ~\ref{g11a00uugg55} guarantees that $\size(\S_1)\lesssim \lambda$, where the size is understood here with respect to the  function $1_F$. Define $\Delta:=[-\log_2(\size(\S_1))].$ Use the result of Proposition ~\ref{Besselsineq} to split $\S_1$ as a disjoint union $\S_1=\bigcup_{n\ge \Delta}\P_n,$ where $\size(\P_n)\le 2^{-n}$ and each $\P_n$ consists of a family $\F_{\P_n}$ of trees  satisfying
\begin{equation}
\label{e:intp1}
\sum_{\T\in\F_{\P_n}}|I_\T|\lesssim 2^{2n}|F|.
\end{equation}  

Let $\epsilon>0$ be an arbitrary  positive number.
For each $n\ge \Delta$ define $\sigma:=2^{-n}$, $\beta:=2^{3n}\lambda^{p}$, $\gamma:=2^{-n/2}\lambda^{1/2-\epsilon}$. Define $a_P:=\langle 1_F,w_{P_1}\rangle$ for each $P\in\P_n$ and note that the collection $\P_n$ together with the coefficients $(a_P)_{P\in\P_n}$ satisfy the requirements of Theorem ~\ref{mainineqyy123}. Let $\F_{\P_n}^{(2)}$ be the collection of all the 2-trees $\T^{(2)}$ obtained from  the trees $\T\in\F_{\P_n}$ by the procedure described in the beginning of Section ~\ref{arbitrary}. Define the corresponding exceptional sets
$$E^{(1)}_n=\{x:\sum_{\T\in\F}1_{I_{\T}}(x)>\beta\},$$
$$E^{(2)}_n=\bigcup_{\T\in\F^{(2)}}\{x:\|\sum_{P\in\T\atop{|I_P|< 2^{k}}}a_{P}w_{P_1}(x)\|_{V^r(k)}>\gamma\}.$$
By  ~\eqref{e:intp1} and the fact that $\lambda\le 1$ we get
$$|E^{(1)}_{n}|\lesssim 2^{-n}\lambda^{-p}|F|.$$
By Theorem ~\ref{BMOestimy1} and the fact that $\lambda\le 1$, for each $1<s<\infty$ we get
$$|E^{(2)}_{n}|\lesssim \gamma^{-s}\sigma^{s-2}|F|\lesssim 2^{-n(s/2-2)}\lambda^{-s(1/2-\epsilon)}|F|.$$
Define 
$$E^{*}:=\bigcup_{n\ge \Delta}(E^{(1)}_{n}\cup E^{(2)}_{n}).$$ Note that since $\Delta\gtrsim \log_2(\lambda^{-1})$, we have $|E^{*}|\lesssim \lambda^{-p}|F|,$ an estimate which can be seen by using a sufficiently large $s$.

For each $x\notin E^{*}$, Theorem ~\ref{mainineqyy123} guarantees that
\begin{align*}
&\| \sum_{P\in \S_1\atop{|I_P|< 2^k}}\langle1_{F},w_{P_1}\rangle w_{P_1}(x)1_{\omega_{P,2}}(\theta) \|_{M_{2}^*(\theta)}\\
&\quad \le 
\sum_{n\ge \Delta}\| \sum_{P\in \P_n\atop{|I_P|< 2^k}}\langle 1_{F},w_{P_1}\rangle w_{P_1}(x)1_{\omega_{P,2}}(\theta) \|_{M_{2}^*(\theta)}
\\&\quad\lesssim \sum_{n\ge \Delta}n[2^{(3(r/2-1)-1)n}\lambda^{p(r/2-1)}+2^{(3(r/2-1)-1/2)n}\lambda^{p(r/2-1)+1/2-\epsilon}]\\&\quad\lesssim \lambda^{1-2\epsilon},
\end{align*}
if $r$ is chosen sufficiently close to 2, depending on $p$ and $\epsilon$. 
This ends the proof of ~\eqref{en.11}, and hence the proof of  ~\eqref{en.6} in the case $\lambda\le 1$.

We next focus on proving ~\eqref{en.6} in the case $\lambda>1$. In this remaining part of the discussion the size will be  understood with respect to the function $\lambda^{-1}1_F$. Proposition ~\ref{g11a00uugg55} implies that $\size(\S)\lesssim \lambda^{-1}$. Define $\Delta:=[-\log_2(\size(\S))].$ Split  $\S$ as before, as a disjoint union $\S=\bigcup_{n\ge \Delta}\P_n,$ where $\size(\P_n)\le 2^{-n}$ and each $\P_n$ consists of a family $\F_{\P_n}$ of trees  satisfying 
\begin{equation}
\label{e:intpikh}
\sum_{\T\in\F_{\P_n}}|I_\T|\lesssim 2^{2n}\lambda^{-2}|F|.
\end{equation}

For each $n\ge \Delta$ define $\sigma:=2^{-n}$, $\beta:=2^{(p+1)n}$ and $\gamma:=2^{-n/2}$.
Define also $a_P:=\langle \lambda^{-1}1_F,w_{P_1}\rangle$ for each $P\in\P_n$ and note that the collection $\P_n$ together with  the coefficients $(a_P)_{P\in\P_n}$ satisfy the requirements of Theorem ~\ref{mainineqyy123}. Let $\F_{\P_n}^{(2)}$ the collection of all the 2-trees $\T^{(2)}$ obtained from  the trees $\T\in\F_{\P_n}$ by the procedure described in the beginning of the  Section ~\ref{arbitrary}. Define the corresponding exceptional sets 
$$E^{(1)}_n=\{x:\sum_{\T\in\F}1_{I_{\T}}(x)>\beta\},$$
$$E^{(2)}_n=\bigcup_{\T\in\F^{(2)}}\{x:\|\sum_{P\in\T\atop{|I_P|< 2^{k}}}a_{P}w_{P_1}(x)\|_{V^r(k)}>\gamma\}.$$
By  ~\eqref{e:intpikh} and the fact that $\lambda\ge 1$ we get
$$|E^{(1)}_{n}|\lesssim 2^{-(p-1)n}\lambda^{-2}|F|.$$
By Theorem ~\ref{BMOestimy1} and the fact that $\lambda\ge 1$, for each $1<s<\infty$ we get
$$|E^{(2)}_{n}|\lesssim \gamma^{-s}\sigma^{s-2}\lambda^{-2}|F|\lesssim 2^{-n(s/2-2)}\lambda^{-2}|F|. $$
Define 
$$E^{*}:=\bigcup_{n\ge \Delta}(E^{(1)}_{n}\cup E^{(2)}_{n}).$$ Note that since $\Delta\gtrsim \log_2(\lambda)$, we have $|E^{*}|\lesssim \lambda^{-p}|F|,$ an estimate which can be seen by using a sufficiently large $s$.

For each $x\notin E^{*}$, Theorem ~\ref{mainineqyy123} guarantees that
$$
\left\|\left(\sum_{P\in \S\atop{|I_P|< 2^k}}\langle \lambda^{-1}1_{F},w_{P_1}\rangle w_{P_1}(x) 1_{\omega_{P,2}}(\theta)\right)_{k\in\Z}\right\|_{M_{2}^*(\theta)}$$ $$\le \sum_{n\ge \Delta}\left\|\left(\sum_{P\in \P_n\atop{|I_P|< 2^k}}\langle \lambda^{-1}1_{F},w_{P_1}\rangle w_{P_1}(x)1_{\omega_{P,2}}(\theta)\right)_{k\in\Z}\right\|_{M_{2}^*(\theta)}$$ $$ \lesssim \sum_{n\ge \Delta}n2^{(p+1)(r/2-1)n}(2^{-n}+2^{-n/2})$$ $$\lesssim 1,$$
if $r$ is chosen sufficiently close to 2, depending only on $p$. This ends the proof of ~\eqref{en.6}
in the case $\lambda>1$.



\begin{thebibliography}{99}
\bibitem{Bo1} J. Bourgain, {\em Pointwise ergodic theorems for arithmetic sets},  Publ. Math. IHES \textbf{69} (1989), 5-45.
\bibitem{Bo5} J. Bourgain, {\em Return times of dynamical systems}, C. R. Acad. Sci. Paris, t.\textbf{306}, S\'erie I  (1988), 483-485.
\bibitem{DLTT} C. Demeter, M. Lacey, T. Tao and C. Thiele  {\em Breaking the duality in the return times theorem}, to appear in Duke Math. J.
\bibitem{MTT6} C. Muscalu, T. Tao  and C. Thiele {\em $L\sp p$ estimates for the biest. I. The Walsh case.}  Math. Ann.  329  (2004),  no. 3, 401--426.
\bibitem{Th} C. Thiele, {\em On the Bilinear Hilbert Transform}, Habilitationschrift, 1998.
\bibitem{Th1} C. Thiele, {\em The quartile operator and pointwise convergence of Walsh series}  Trans. Amer. Math. Soc.  352  (2000),  no. 12, 5745-5766.
\bibitem{Th2} C. Thiele, {\em The maximal quartile operator.  Rev. Mat. Iberoamericana}  17  (2001),  no. 1, 107-135.
\end{thebibliography}
\end{document}